\documentclass[a4paper,10pt]{article}
\usepackage{mathtext}
\usepackage[T1,T2A]{fontenc}
\usepackage[cp1251]{inputenc}
\usepackage[english]{babel}
\usepackage{amsmath}
\usepackage{amsfonts}
\usepackage{amssymb}
\usepackage{mathrsfs}
\usepackage{amsthm}
\usepackage{titling}
\usepackage{multicol}
\usepackage{graphicx}
\usepackage{wrapfig}
\usepackage{enumerate}

\usepackage{color}
\usepackage{euscript}

\DeclareMathOperator{\sign}{sign}

\newcommand{\Bell}{\boldsymbol{B}}

\newcommand{\BMO}{\mathrm{BMO}}
\newcommand{\eps}{\varepsilon}

\DeclareMathOperator{\E}{\mathbb{E}}
\newcommand{\av}[2]{\langle {#1}\rangle_{{}_{#2}}}
\renewcommand{\le}{\leqslant}

\renewcommand{\leq}{\leqslant}
\renewcommand{\geq}{\geqslant}
\renewcommand{\emptyset}{\varnothing}

\renewcommand{\phi}{\varphi}
\newcommand{\Set}[2]{\Big\{{#1}\colon{#2}\Big\}}

\newcommand{\eqlb}[2]{\begin{equation}\label{#1} #2 \end{equation}}

\newcommand{\vf}{\varphi}

\textwidth=16cm
\oddsidemargin=0pt
\topmargin=0pt

\newtheorem{Le}{Lemma}[section]

\newtheorem{St}[Le]{Proposition}
\newtheorem{Th}[Le]{Theorem}
\newtheorem{Cor}[Le]{Corollary}
\newtheorem{Rem}[Le]{Remark}

\numberwithin{equation}{section}

\begin{document}
\author{Dmitriy~Stolyarov \and Vasily Vasyunin \and Pavel Zatitskiy}
\title{Sharp mutliplicative inequalities with $\BMO$~$\mathrm{I}$\thanks{Support by the Russian Science Foundation grant 19-71-10023.}}

\maketitle
\begin{abstract}
We find the best possible constant~$C$ in the inequality $\|\varphi\|_{L^r}\leq C\|\varphi\|_{L^p}^{\frac{p}{r}}\|\varphi\|_{\BMO}^{1-\frac{p}{r}}$, where~$2 \leq r$ and~$p < r$. We employ the Bellman function technique to solve this problem in the case of an interval and then transfer our results to the circle and the line. 

\medskip

\emph{2010 MSC subject classification}: 42B35, 60G45.

\emph{Keywords}: bounded mean oscillation, Bellman function, interpolation.
\end{abstract}

\section{Introduction}
The space~$\BMO$ plays an important role in analysis. It serves as a good substitute for~$L^{\infty}$ in the endpoint estimates that fail for the latter space. On the other hand, some estimates that are trivial for~$L^{\infty}$ become more interesting for the case of~$\BMO$. Consider the classical multiplicative inequality
\begin{equation}\label{MultiplicativeLebesgue}
\|\varphi\|_{L^r}\leq \|\varphi\|_{L^p}^{\frac{p}{r}}\|\varphi\|_{L^{\infty}}^{1-\frac{p}{r}}, \quad 0 < p < r < \infty.
\end{equation}
This inequality holds true for any function~$\varphi$ on a measurable space, and follows from the simple estimate~$|\vf(x)| \leq \|\vf\|_{L^{\infty}}$. The bound~\eqref{MultiplicativeLebesgue} is sharp. 

The inequality
\begin{equation}\label{FirstMultiplicative}
\|\varphi\|_{L^r}\leq C\|\varphi\|_{L^p}^{\frac{p}{r}}\|\varphi\|_{\BMO}^{1-\frac{p}{r}},\qquad 1\leq p < r < \infty,
\end{equation}
is less trivial. Here~$C$ is a constant that should depend neither on~$\vf$, nor on~$p$, nor on~$r$. For this inequality, one needs to specify the domain of~$\vf$. In the case of the Euclidean space~$\mathbb{R}^d$, the inequality
\begin{equation*}
\|\varphi\|_{L^r(\mathbb{R}^d)}\leq C_d\|\varphi\|_{L^p(\mathbb{R}^d)}^{\frac{p}{r}}\|\varphi\|_{\BMO(\mathbb{R}^d)}^{1-\frac{p}{r}}, \qquad 1\leq p < r < \infty,
\end{equation*}
was first obtained in~\cite{ChenZhu}. It plays an important role in interpolation and extrapolation theory for~$\BMO$, see~\cite{Milman}. We note that the exponents~$\frac{p}{r}$ and~$1-\frac{p}{r}$ in the inequalities above are also dictated by interpolation theory, see~\cite{Milman}. Alternatively, one may restore the exponents from the dilation invariance of~\eqref{MultiplicativeLebesgue} and~\eqref{FirstMultiplicative}. Our aim is to obtain sharp versions of~\eqref{FirstMultiplicative}. For that we need to specify the choice of the~$\BMO$ norm. 

The progress in computation of sharp constants in the John--Nirenberg type inequalities for~$\BMO$ in higher dimensions is scant. In fact, even the asymptotical behavior of these constants is unknown, see~\cite{CSS}. So, we limit ourselves to the case~$d=1$. Let us consider the case of the~$\BMO$ space on an interval~$I$. A real-valued function~$\varphi\in L^1(I)$ belongs to~$\BMO(I)$ provided the quantity
\begin{equation}\label{BMOnorm}
\|\varphi\|_{\BMO(I)}^2 =\!\!\!\!\! \sup\limits_{\genfrac{}{}{0pt}{3}{J\,\hbox{\tiny is a}}{\hbox{\tiny subinterval of}\, I}}\!\!\!\av{\,|\varphi - \av{\varphi}{J}|^2}{J}
\end{equation}
is finite. Here and in what follows we use the notation~$\av{\psi}{E}$ to denote the average of a function~$\psi$ over a set~$E$ of positive measure, that is
\begin{equation*}
\av{\psi}{E} = \frac{1}{|E|}\int\limits_E\psi.
\end{equation*}
The choice of the exponent~$2$ in the definition~\eqref{BMOnorm} does not affect the validity of~\eqref{FirstMultiplicative} since, by the John--Nirenberg inequality, one obtains an equivalent norm for other values of~$p$. However, this choice is important for sharp constants. Most of the work related to sharp constants for~$\BMO$ functions was done with the quadratic norm. However, see~\cite{Korenovskii},~\cite{Lerner},~\cite{Slavin}, and~\cite{SlavinVasyunin} for the results concerning the classical~$1$-norm and arbitrary~$p$-norm.

We warn the reader that the inequality~\eqref{FirstMultiplicative} cannot be true for functions on the interval since the seminorm~\eqref{BMOnorm} vanishes on constant functions. So, this inequality needs a slight modification, which is our first main result.
\begin{Th}\label{MultiplicativeTheoremInterval}
The inequality
\eqlb{eq141202}{
\|\varphi\|_{L^r(I)} \leq \Big(\frac{\Gamma(r+1)}{\Gamma(p+1)}\Big)^\frac{1}{r}\|\varphi\|_{L^p(I)}^{\frac{p}{r}}\|\varphi\|_{\BMO(I)}^{1-\frac{p}{r}},\qquad \av{\varphi}{I} = 0,
}
holds true and is sharp when~$p\geq 1$ and $\max(2, p) \leq r < \infty$. 
\end{Th}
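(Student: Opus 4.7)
The plan is to use the Bellman function method. By the joint homogeneity of all three seminorms in $\varphi\mapsto\lambda\varphi$ and by dilation invariance of the problem, we may assume $I=[0,1]$ and $\|\varphi\|_{\BMO(I)}\leq 1$, reducing the inequality to
\begin{equation*}
\int_0^1 |\varphi|^r \leq C \int_0^1 |\varphi|^p, \qquad C=\frac{\Gamma(r+1)}{\Gamma(p+1)},
\end{equation*}
for every $\varphi$ with $\av{\varphi}{[0,1]}=0$ and $\|\varphi\|_{\BMO([0,1])}\leq 1$.

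Next, I would introduce the two-dimensional Bellman function
\begin{equation*}
\BM(x_1,x_2) = \sup\Big\{\av{|\varphi|^r - C|\varphi|^p}{I} \; : \; \av{\varphi}{I}=x_1,\ \av{\varphi^2}{I}=x_2,\ \|\varphi\|_{\BMO(I)}\leq 1\Big\},
\end{equation*}
defined on the parabolic strip $\Omega=\{(x_1,x_2):0\leq x_2-x_1^2\leq 1\}$, whose upper boundary encodes the $\BMO$ constraint. The desired inequality is precisely the statement that $\BM(0,x_2)\leq 0$ for $x_2\in[0,1]$. Standard Bellman considerations show that $\BM$ is locally concave on $\Omega$ (via splitting $I$ into equal halves), agrees with the obstacle $|x_1|^r-C|x_1|^p$ along the lower parabolic boundary $\{x_2=x_1^2\}$ (constant functions), and along the upper free boundary $\{x_2=x_1^2+1\}$ is governed by John--Nirenberg extremals of the form $\varphi(x)=a-\log(x/\beta)$ on endpoint subintervals $[0,\beta]$. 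These extremals push Lebesgue measure forward to a shifted exponential distribution, and the identity
\begin{equation*}
\int_0^\infty t^r e^{-t}\,dt \;=\; C\int_0^\infty t^p e^{-t}\,dt
\end{equation*}
is what singles out the sharp constant $C=\Gamma(r+1)/\Gamma(p+1)$.

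The heart of the proof is then to write down an explicit candidate $\BM^{\mathrm{cand}}$, verify local concavity on $\Omega$ and the boundary majorization on $\{x_2=x_1^2\}$, and invoke the standard Bellman maximum principle to conclude $\BM\leq\BM^{\mathrm{cand}}$. Sharpness would be obtained by exhibiting a sequence of test functions asymptotically tracking the logarithmic extremals while preserving mean zero via a small compensating piece, so that the ratio of $L^r$ to $L^p$ moments tends to $C$ and $\|\varphi_n\|_{\BMO}\to 1$ simultaneously.

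The principal technical obstacle will be the explicit construction of $\BM^{\mathrm{cand}}$: since the obstacle $|t|^r-C|t|^p$ changes sign, the candidate is not a classical concave envelope on the line, and one must honestly propagate the extremal trajectories along $\partial\Omega$ by solving the associated first-order ODE. The hypothesis $r\geq\max(2,p)$ enters exactly at this stage, ensuring that the propagated trajectories do indeed sweep out a locally concave candidate on $\Omega$ with the correct boundary data; both the convexity structure of $|t|^r-C|t|^p$ near the origin and the monotonicity of the Bellman ODE along the upper boundary hinge on it.
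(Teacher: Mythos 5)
Your reduction is sound as far as it goes: normalizing $\|\varphi\|_{\BMO}\le 1$, introducing $\BM(x_1,x_2)=\sup\av{|\varphi|^r-C|\varphi|^p}{I}$ on the parabolic strip, and observing that the claim amounts to $\BM(0,x_2)\le 0$ for $x_2\in[0,1]$ is a legitimate two-dimensional reformulation, and your heuristic for the constant (the identity $\Gamma(r+1)=C\,\Gamma(p+1)$ produced by logarithmic extremals with exponentially distributed modulus) matches the extremal function the paper exhibits. But there is a genuine gap exactly where you place ``the principal technical obstacle'': the candidate $\BM^{\mathrm{cand}}$ is never constructed, its local concavity is never verified, and the hypothesis $r\ge\max(2,p)$ is never actually used --- it is only asserted that it ``enters at this stage.'' In the Bellman method this construction-plus-verification \emph{is} the proof; what you have is a plan. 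The difficulty is not cosmetic: the obstacle $|t|^r-C|t|^p$ changes sign and convexity, so the foliation of the strip (chords, cups, tangency pattern along the upper parabola) for this boundary datum must be built from scratch and depends on $p$, $r$ and $C$ simultaneously; the general theory of locally concave functions on the strip guarantees abstract minimality properties but does not hand you the explicit formula, nor the verification that concavity holds precisely under $(r-2)(p-r)<0$. Likewise the sharpness half is only sketched (a ``small compensating piece'' preserving mean zero without inflating the $\BMO$ norm needs an actual construction).

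For comparison, the paper avoids this obstacle by going up a dimension: it computes the three-dimensional Bellman function $\Bell_{p,r}(x_1,x_2,x_3)$ with the extra coordinate $x_3=\av{|\varphi|^p}{I}$ and objective $\av{|\varphi|^r}{I}$. The domain in $x_3$ is bounded by the known Slavin--Vasyunin functions $\Bell^{\pm}_{p}$, the candidate $G$ is assembled from the same kernels $m_p,k_p,m_r,k_r$ by linearity on an explicit foliation by planes (formulas \eqref{GDef1} and \eqref{GDef2}), and local concavity reduces to a sign computation for $G_{x_3x_3}$, which is where $\sign\big((r-2)(p-r)\big)$ appears explicitly. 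The constant is then \emph{extracted} rather than guessed: on the slice $x_1=0$ the quantity $\Bell_{p,r}(0,x_2,x_3)/x_3$ is a ratio of two functions linear in $x_2$, and a one-variable monotonicity argument (using $m_r(u)/(ru^r)>m_p(u)/(pu^p)$ and $r>2$) shows the maximum is $\Gamma(r+1)/\Gamma(p+1)$, attained at $(0,1,\Gamma(p+1)/2)$ with the explicit optimizer $\phi_0$ (two logarithmic branches glued to a zero plateau on $(-2,2)$), so equality holds exactly on the interval and no approximating sequence is needed there. If you wish to salvage the two-dimensional route, you must supply the analogue of this work for the sign-changing obstacle: an explicit locally concave majorant of $|t|^r-C|t|^p$ on the lower parabola that is nonpositive on the segment $\{0\}\times[0,1]$, together with a proof that $r\ge\max(2,p)$ makes the construction concave, and an honest extremal (or extremal sequence) with mean zero and $\BMO$ norm one. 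Until then the central analytic content of the theorem is missing.
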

We must say a couple of words about our tools. We will be using the Bellman function method. It allows to derive sharp inequalities for non-compact infinite dimensional objects (such as the unit ball of the~$\BMO$ space) from certain finite dimensional boundary value problems. The papers~\cite{NT} and~\cite{NTV} laid the foundation of the method. We refer the reader to~\cite{SlavinVasyuninLecNot} and~\cite{Volberg} for the basics of the theory and to~\cite{Osekowski} for the probabilistic point of view (in the probability theory, this technique is usually called the Burkholder method).

The Bellman functions are convenient for~$\BMO$ problems. Their successful application lead to the computation of sharp constants in various forms of the John--Nirenberg inequalities and related problems, see~\cite{SV},~\cite{SV2}, and~\cite{VasyuninVolberg}. Later the branch of the Bellman function method that works with~$\BMO$ problems was converted into a theory in~\cite{Memoirs} and~\cite{SZ}.

The Bellman function appearing in our proof of Theorem~\ref{MultiplicativeTheoremInterval} is interesting in itself. All the Bellman functions in the papers cited in the previous paragraph are two-dimensional, whereas our function is three-dimensional. The two-dimensional optimization problems related to~$\BMO$ are well-understood (see~\cite{Memoirs} and~\cite{CR2}), which is not quite true for higher dimensional ones. The difficulty increases dramatically. Luckily, the Bellman function appearing in the proof of Theorem~\ref{MultiplicativeTheoremInterval} is tractable.

Using the technique developed in~\cite{SZTransference}, we will transfer our results to the circle and the line.
\begin{Th}\label{Ctheorem}
The inequality
\eqlb{MultCircle}{
\|\varphi\|_{L^r(\mathbb{T})} \leq \Big(\frac{\Gamma(r+1)}{\Gamma(p+1)}\Big)^\frac{1}{r}\|\varphi\|_{L^p(\mathbb{T})}^{\frac{p}{r}}\|\varphi\|_{\BMO(\mathbb{T})}^{1-\frac{p}{r}},\qquad \varphi \in \BMO(\mathbb{T}),\ \int\limits_{\mathbb{T}}\varphi = 0,
}
holds true and is sharp when~$p\geq 1$ and $\max(2, p) \leq r < \infty$. 
\end{Th}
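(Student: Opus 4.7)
The plan is to reduce Theorem~\ref{Ctheorem} to Theorem~\ref{MultiplicativeTheoremInterval} via the cut-and-localize technique of~\cite{SZTransference}, with the inequality itself essentially immediate and sharpness being the only delicate point. For the upper bound, fix $\vf\in\BMO(\mathbb{T})$ with $\int_\mathbb{T}\vf=0$. Cut the circle at any point to obtain an interval $I$ of the same length; the resulting function on $I$ inherits zero average and preserves its $L^r$ and $L^p$ norms. Every subinterval of $I$ is a subarc of $\mathbb{T}$ (namely, one that avoids the cut point), so the supremum defining $\|\vf\|_{\BMO(I)}$ runs over a strictly smaller family than the one defining $\|\vf\|_{\BMO(\mathbb{T})}$, giving $\|\vf\|_{\BMO(I)}\le\|\vf\|_{\BMO(\mathbb{T})}$. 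Applying Theorem~\ref{MultiplicativeTheoremInterval} on $I$ and using $1-\tfrac{p}{r}\ge 0$ then yields~\eqref{MultCircle}.

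For sharpness, pick a sequence $\{\vf_n\}$ of near-extremizers of~\eqref{eq141202} on $I_0=[0,1]$ with $\av{\vf_n}{I_0}=0$. For small $\eps>0$, localize by setting $\vf_{n,\eps}(x)=\vf_n(x/\eps)$ on an arc of length $\eps$ inside $\mathbb{T}$ and $\vf_{n,\eps}\equiv 0$ elsewhere. Then $\int_\mathbb{T}\vf_{n,\eps}=0$, and $\|\vf_{n,\eps}\|_{L^q(\mathbb{T})}=\eps^{1/q}\|\vf_n\|_{L^q(I_0)}$ for $q\in\{p,r\}$, so the ratio $\|\vf_{n,\eps}\|_{L^r(\mathbb{T})}/\|\vf_{n,\eps}\|_{L^p(\mathbb{T})}^{p/r}$ matches the corresponding interval ratio for $\vf_n$. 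What remains is to check $\|\vf_{n,\eps}\|_{\BMO(\mathbb{T})}\to\|\vf_n\|_{\BMO(I_0)}$ as $\eps\to 0$: arcs lying inside the support scale back to subintervals of $I_0$; arcs containing the support have $\langle\vf_{n,\eps}\rangle=0$ and variance $(\eps/|J|)\|\vf_n\|_{L^2(I_0)}^2\le\|\vf_n\|_{\BMO(I_0)}^2$, using that $\|\vf_n\|_{L^2(I_0)}^2\le\|\vf_n\|_{\BMO(I_0)}^2$ for mean-zero functions on the unit interval.

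The main obstacle is the partial-overlap case: arcs $J$ whose endpoints straddle the boundary of $\supp\vf_{n,\eps}$ produce a variance that mixes a boundary chunk of $\vf_n$ with a zero region and is not a priori dominated by $\|\vf_n\|_{\BMO(I_0)}^2$. Controlling it requires sharp bounds on the partial averages $\av{\vf_n}{[0,t]}$ as $t\to 0$ or $t\to 1$ (of John--Nirenberg type), routed through the dilation parameter $\eps$ so that the residual error vanishes in the limit. The transference framework of~\cite{SZTransference} packages exactly this boundary control into a single statement that we invoke, reducing the circle problem to the interval problem both for the bound and for the production of an extremal sequence; together with Theorem~\ref{MultiplicativeTheoremInterval}, this delivers the sharp constant in~\eqref{MultCircle}.
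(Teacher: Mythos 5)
Your upper-bound argument is exactly the paper's: cut the circle, note that subarcs avoiding the cut point give $\|\vf\|_{\BMO(I)}\le\|\vf\|_{\BMO(\mathbb{T})}$, and apply Theorem~\ref{MultiplicativeTheoremInterval}. That part is fine.

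The sharpness argument, however, has a genuine gap, and it sits precisely at the point you flag as the ``main obstacle.'' Dilating a near-extremizer onto a short arc and extending by zero does not control $\|\vf_{n,\eps}\|_{\BMO(\mathbb{T})}$: both the $\BMO$ seminorm and the variance over an arc straddling the boundary of the support are invariant under the dilation $x\mapsto x/\eps$, so sending $\eps\to 0$ does nothing to the ``partial-overlap'' arcs, and the claimed limit $\|\vf_{n,\eps}\|_{\BMO(\mathbb{T})}\to\|\vf_n\|_{\BMO(I_0)}$ is false in general. Worse, the natural near-extremizers are modelled on the optimizer $\phi_0$ of Section~\ref{SETC}, which grows logarithmically at the endpoints of its interval; for a function behaving like $-\ln(2-t)$ near an endpoint, the variance of the zero extension over symmetric straddling intervals of length $2h$ grows like $\ln^2 h$ as $h\to 0$, so the extension is not even in the unit ball of $\BMO$ up to $O(\delta)$, at any scale $\eps$. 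Your final sentence, that the transference framework of~\cite{SZTransference} ``packages exactly this boundary control into a single statement,'' misdescribes that machinery: there is no statement there that bounds the $\BMO$ norm of a zero extension of an arbitrary near-extremizer, and equimeasurable rearrangement does not preserve $\BMO$, so the needed control cannot be waved in.

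What the paper actually does is different and heavier. For the circle it does not localize at all: it realizes the distribution of $\phi_0$ as the terminal distribution of an $\Omega^2_{1+\delta/3}$-martingale (Theorem~3.7 of~\cite{SZ}), stops it, and applies Theorem~2.3 of~\cite{SZTransference} to produce a genuinely $1$-periodic function $\psi_0$ with $\|\psi_0\|_{\BMO(\mathbb{R})}\le 1+\delta$, zero mean, unit second moment, and $L^p$, $L^r$ integrals within $O(\delta)$ of those of $\phi_0$ (Lemma~\ref{Le141201}); viewed on $\mathbb{T}$ this is the extremal sequence. The zero-extension idea is used only later, for the line (Theorem~\ref{Rtheorem}), and there it works only after the homogenization $\Gamma_\lambda$ of~\cite{SZTransference} rearranges $\psi_0$ so that every subinterval touching the boundary of $[0,1]$ has first and second moments close to $0$ and $1$ (the estimate~\eqref{Close}); that is what tames exactly the straddling intervals your argument leaves uncontrolled. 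So your proposal needs either this martingale-transference construction or an equivalent rearrangement argument; as written, the sharpness half is incomplete.
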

\begin{Th}\label{Rtheorem}
The inequality
\eqlb{eq141201}{
\|\varphi\|_{L^r(\mathbb{R})} \leq \Big(\frac{\Gamma(r+1)}{\Gamma(p+1)}\Big)^\frac{1}{r}\|\varphi\|_{L^p(\mathbb{R})}^{\frac{p}{r}}\|\varphi\|_{\BMO(\mathbb{R})}^{1-\frac{p}{r}},\qquad \varphi \in L^p(\mathbb{R}),
}
holds true and is sharp when~$p\geq 1$ and $\max(2, p) \leq r < \infty$. 
\end{Th}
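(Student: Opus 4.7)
The plan is to deduce Theorem~\ref{Rtheorem} from Theorem~\ref{MultiplicativeTheoremInterval} by exhausting $\mathbb{R}$ with intervals $I_N = [-N, N]$. Fix $\varphi \in L^p(\mathbb{R})\cap\BMO(\mathbb{R})$, set $c_N = \av{\varphi}{I_N}$, and apply Theorem~\ref{MultiplicativeTheoremInterval} to the mean-zero function $\varphi - c_N$ on $I_N$:
\begin{equation*}
\|\varphi - c_N\|_{L^r(I_N)} \leq C_{r,p}\,\|\varphi - c_N\|_{L^p(I_N)}^{p/r}\,\|\varphi - c_N\|_{\BMO(I_N)}^{1-p/r},\qquad C_{r,p} = \Bigl(\tfrac{\Gamma(r+1)}{\Gamma(p+1)}\Bigr)^{1/r}.
\end{equation*}
Monotone convergence yields $\|\varphi\|_{L^q(I_N)}\to\|\varphi\|_{L^q(\mathbb{R})}$ for $q\in\{p,r\}$; the $\BMO$ seminorm is insensitive to additive constants, and $\|\varphi\|_{\BMO(I_N)}\uparrow\|\varphi\|_{\BMO(\mathbb{R})}$ because every subinterval of $\mathbb{R}$ eventually lies inside some~$I_N$.

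The technical core is to show that recentering by $c_N$ perturbs each $L^q$ norm by a vanishing amount. The triangle inequality gives $\bigl|\|\varphi - c_N\|_{L^q(I_N)} - \|\varphi\|_{L^q(I_N)}\bigr| \leq |c_N|(2N)^{1/q}$, and H\"older bounds $|c_N|\leq (2N)^{-1/p}\|\varphi\|_{L^p(\mathbb{R})}$, so the error at $q=r$ is at most $(2N)^{1/r-1/p}\|\varphi\|_{L^p(\mathbb{R})}$, which tends to~$0$ because $r>p$. For $q=p$ this crude bound is useless; when $p>1$, I would split $\int_{I_N}\varphi$ into a bounded central contribution and an $L^p$-small tail and apply H\"older on each piece, obtaining $|c_N|(2N)^{1/p}\to 0$. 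Letting $N\to\infty$ then produces~\eqref{eq141201} for $p>1$. For the endpoint $p=1$ I would first reduce to $\varphi\in L^1(\mathbb{R})\cap L^\infty(\mathbb{R})$ by truncation, which does not increase the quadratic $\BMO$ seminorm (for any $1$-Lipschitz scalar function $T$ one has $\av{|T\varphi - \av{T\varphi}{J}|^2}{J}\leq\av{|T\varphi - T(\av{\varphi}{J})|^2}{J}\leq\av{|\varphi - \av{\varphi}{J}|^2}{J}$), then pass to the limit $p\to 1^+$ in the inequality just proved, using continuity of $C_{r,p}$ in~$p$ and $\|\varphi\|_{L^p(\mathbb{R})}^p\to\|\varphi\|_{L^1(\mathbb{R})}$ by dominated convergence; a Fatou argument removes the truncation at the end.

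Sharpness on the line is transferred from sharpness on the interval by dilation. Starting from a near-extremal mean-zero $\psi$ on $[-1,1]$ produced by Theorem~\ref{MultiplicativeTheoremInterval} (mollified to vanish near $\pm 1$), I would extend by zero and set $\psi_\lambda(x) = \psi(x/\lambda)$; then $\|\psi_\lambda\|_{L^q(\mathbb{R})} = \lambda^{1/q}\|\psi\|_{L^q([-1,1])}$, and $\|\psi_\lambda\|_{\BMO(\mathbb{R})}\to\|\psi\|_{\BMO([-1,1])}$ as $\lambda\to\infty$: subintervals contained in $[-\lambda,\lambda]$ reproduce the interval seminorm by scale invariance, while straddling or exterior intervals contribute a vanishing amount because $\psi$ vanishes near the endpoints. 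Substituting into~\eqref{eq141201} forces the constant $C_{r,p}$ to be optimal. The main obstacle I expect is the endpoint $p=1$: the interval exhaustion leaves an irreducible correction of order $|\int_\mathbb{R}\varphi|$, so the detour through truncation and the limit $p\to 1^+$ must be executed carefully to prevent any spurious constant factor from entering the inequality.
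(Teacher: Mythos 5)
Your treatment of the inequality half is essentially the paper's argument: exhaust $\mathbb{R}$ by the intervals $I_N$, apply Theorem~\ref{MultiplicativeTheoremInterval} to $\varphi-\av{\varphi}{I_N}$, and show the recentering error vanishes by the same split-and-H\"older estimate that the paper isolates as Lemma~\ref{Lem161201}; your truncation and $p\to 1^{+}$ detour at the endpoint $p=1$ (where $|I_N|^{1/p}|\av{\varphi}{I_N}|$ need not tend to zero) is a legitimate supplement, and the $1$-Lipschitz argument for the quadratic seminorm is correct. So the first half stands.

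The sharpness argument, however, has a genuine gap. The quadratic $\BMO(\mathbb{R})$ seminorm is dilation invariant, so $\|\psi_\lambda\|_{\BMO(\mathbb{R})}$ does not depend on $\lambda$ at all: it equals the line seminorm of the zero extension of $\psi$, and the claimed convergence $\|\psi_\lambda\|_{\BMO(\mathbb{R})}\to\|\psi\|_{\BMO([-1,1])}$ is unfounded, since straddling intervals dilate together with the function and never become negligible. The real difficulty is hidden in the words ``mollified to vanish near $\pm1$''. The interval extremizer $\phi_0$ of Section~\ref{SETC} has logarithmic singularities at the endpoints of $I$; its zero extension therefore has \emph{infinite} line seminorm (an interval $[2-h,2+h]$ exhibits oscillation of order $|\ln h|$), and any attempt to force a near-extremizer to vanish near the endpoints by a local modification fails: truncating the logarithm at height $M$ leaves only a length of order $e^{-M}$ before the endpoint, so the forced descent from $M$ to $0$ sits next to a stretch of comparable length where the function is close to $M$, which by itself creates $\BMO$ norm of order $M$. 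Thus one cannot simultaneously keep the interval norm at most $1+\delta$, keep the $L^p$ and $L^r$ averages near their extremal values $\Gamma(p+1)/2$ and $\Gamma(r+1)/2$, and have the function vanish near the endpoints, by mollification alone. What is actually needed is an (approximately) equimeasurable rearrangement of $\phi_0$ whose \emph{line} $\BMO$ norm is at most $1+\delta$ and which is supported in a bounded interval; this is precisely what the paper obtains through the $\Omega^2_\eps$-martingale transference theorem and the homogenization procedure of~\cite{SZTransference}, packaged as Lemmas~\ref{Le141201} and~\ref{Le200101}. That machinery, or some substitute for it, is the missing ingredient in your proposal, and without it the optimality of the constant in~\eqref{eq141201} is not established.
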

 
\begin{Rem}
If $r<p$\textup{,} then~\eqref{FirstMultiplicative} is trivially invalid with any finite $C$ in any of the cases considered. The case $1\leq p <r<2$ is substantial\textup{,} but much more complicated. We will investigate it in a separate paper.
\end{Rem}

We postulate the Bellman function problem in the forthcoming section, study its simple properties, and relate it to the already known Bellman functions. We compute the Bellman function in Section~\ref{SectG} and provide a small portion of additional information about the corresponding lower Bellman function in Section~\ref{SecLowBel}. Section~\ref{SETC} contains the proof of Theorem~\ref{MultiplicativeTheoremInterval}. Section~\ref{s6} provides the derivation of  Theorems~\ref{Ctheorem} and~\ref{Rtheorem} from Theorem~\ref{MultiplicativeTheoremInterval}.

\section{Optimization problem}\label{s2}
We introduce the main character. This is the Bellman function~$\Bell_{p,r;\eps}\colon\mathbb{R}^3\to \mathbb{R}\cup\{\pm\infty\}$. It is defined as
\begin{equation}\label{NewBellman}
\Bell_{p,r;\eps} (x_1,x_2,x_3) = \sup\Set{\av{|\phi|^r}{I}}{\|\phi\|_{\BMO(I)} \leq \eps,\; \av{\phi}{I}=x_1,\; \av{\phi^2}{I}=x_2,\; \av{|\phi|^p}{I}=x_3}.
\end{equation}

We say that $\vf$ is a \emph{test function} for the point $x\in\mathbb{R}^3$ if 
\begin{equation*}
\|\phi\|_{\BMO(I)} \leq \eps,\; \av{\phi}{I}=x_1,\; \av{\phi^2}{I}=x_2,\; \av{|\phi|^p}{I}=x_3.
\end{equation*}
The main purpose of this paper is to find an explicit formula for~$\Bell_{p,r;\eps}$. We state this result by referring to the formulas appearing in the forthcoming sections.
\begin{Th}\label{BellmanTheorem}
For $(r-2)(p-r)<0$ the function $\Bell_{p,r;\eps}$ coincides with the function $G$ given by formulas~\eqref{GDef1}\textup{,}~\eqref{Symmetry}\textup{,} and~\eqref{GDef2} on the domains~\eqref{defXi}. The functions~$m_r$ and~$k_r$ are defined by~\eqref{mp+} and~\eqref{mp-} respectively.
\end{Th}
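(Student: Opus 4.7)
The plan is to establish $\Bell_{p,r;\eps} = G$ by proving the two matching inequalities $\Bell_{p,r;\eps} \leq G$ and $\Bell_{p,r;\eps} \geq G$ separately, exploiting the general framework for BMO Bellman functions developed in~\cite{Memoirs} and~\cite{SZ}. First, I would record the basic structural properties of~$\Bell_{p,r;\eps}$: the natural symmetry $\Bell_{p,r;\eps}(-x_1,x_2,x_3) = \Bell_{p,r;\eps}(x_1,x_2,x_3)$ coming from $\phi\mapsto-\phi$, which matches~\eqref{Symmetry}; the scaling behavior in $\eps$; and the description of the admissible domain as a convex subset of $\mathbb{R}^3$ cut out by the moment inequalities and the BMO constraint. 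The general theory then guarantees that $\Bell_{p,r;\eps}$ is the minimal function on this domain which is locally concave along the \emph{admissible chords} corresponding to BMO-compatible splittings of~$I$ and which coincides with $|x_1|^r$ on the fixed boundary where the BMO constraint forces~$\phi$ to be constant.

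Second, I would verify that the piecewise-defined candidate $G$ (formulas~\eqref{GDef1},~\eqref{Symmetry},~\eqref{GDef2} on the subdomains~\eqref{defXi}) satisfies precisely these characterizing properties. This breaks into three tasks: checking the correct fixed-boundary values; checking the local concavity inequality in the admissible directions on each of the open pieces, which for this three-dimensional problem becomes a constraint on the full Hessian restricted to the admissible tangent plane; and checking $C^1$-matching of $G$ across the interfaces between the subdomains~\eqref{defXi}, so that these local concavities glue into a global supersolution. The transcendental functions $m_r,k_r$ defined in~\eqref{mp+},~\eqref{mp-} should arise precisely as the characteristics (or ``rolling'' extremizers) of the Monge--Amp\`ere-type equation governing the unconstrained piece of~$G$; this identifies the pieces as \emph{linearity} domains (where the extremizer is a two-step function of $\phi$) versus \emph{exponential} domains (where the extremizer is a John--Nirenberg-type function with continuously varying parameter).

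Third, the upper bound $\Bell_{p,r;\eps}\le G$ follows from a now-standard verification argument: for any test function $\phi$, one iteratively splits $I$ in a BMO-admissible way, applies the concavity/supersolution property of $G$, and passes to the limit, using the pointwise bound $G\ge|x_1|^r$ on the fixed boundary (together with a uniform estimate on $G$ to justify the passage to the limit as the partition refines). For the matching lower bound $\Bell_{p,r;\eps}\ge G$, I would construct explicit near-extremal test functions by reverse-engineering the geometry of~$G$: on each subdomain one reads off the corresponding family of extremal decompositions (linear combinations, or exponential/``staircase'' functions analogous to the John--Nirenberg extremizers in~\cite{VasyuninVolberg,SV}), parameterized so that the value $\av{|\phi|^r}{I}$ matches $G$ along each characteristic.

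The principal obstacle is expected to be the passage from the two-dimensional BMO optimization paradigm to three dimensions. The admissible-concavity condition is now a constraint on a $3\times 3$ Hessian projected onto a two-plane varying with the base point, and the pieces of $G$ must be glued across two-dimensional interfaces rather than curves; verifying $C^1$-gluing together with the correct one-sided concavity on each side is the delicate technical step. A secondary difficulty is constructing test functions whose trajectories in $\mathbb{R}^3$ correctly traverse each subdomain~\eqref{defXi} and which can be concatenated across the interfaces without losing the BMO bound; the explicit formulas for $m_r$ and $k_r$ should precisely encode the gluing parameters that make this possible.
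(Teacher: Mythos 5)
Your plan is essentially the paper's proof: the paper characterizes $\Bell_{p,r;\eps}$ as the minimal locally concave function on $\Omega^3_\eps$ with the skeleton data $|t|^r$ (Bellman induction gives $\Bell_{p,r}\le G$ once local concavity and the boundary condition are checked), builds $G$ by foliating $\Xi_\pm$ and $\Xi_0$ with plane sections $L_u$, $\tilde L_u$ on which $G$ is linear --- which reduces the concavity check to the sign of $G_{x_3x_3}$, and this is exactly where the hypothesis $(r-2)(p-r)<0$ enters --- and then verifies $C^1$-gluing across $L_\eps$. The only real deviation is in the lower bound: you propose manufacturing near-extremal test functions at every point along each characteristic, whereas the paper needs optimizers only at the vertices $U$, $U_\pm$ of the foliating triangles $T_u$ (and quadrilaterals $\tilde T_u$), where they are already supplied by~\cite{SV} since these vertices lie on the surfaces $x_3=\Bell^\pm_{p;\eps}(x_1,x_2)$; equality then propagates to the interior of each $T_u$ because $G$ is linear there while $\Bell_{p,r}$ is locally concave (the concavity of $\Bell_{p,r}$ itself resting on the splicing result, Corollary~3.13 of~\cite{SZ}). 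Your version is workable --- interior points can indeed be reached by mixing the vertex optimizers via the same splicing lemma, which also answers your concern about concatenation preserving the BMO bound --- but it costs extra constructions that the paper's concavity-plus-linearity argument avoids.
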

For the case $(r-2)(p-r)>0$ the function $G$ constructed by formulas~\eqref{GDef1} and~\eqref{GDef2} will coincide with the minimal Bellman function, see Section~\ref{SecLowBel}. 

To describe the function~$\Bell_{p,r;\eps}$, we will need two auxiliary Bellman functions~$\Bell^\pm_{p;\eps}\colon\mathbb{R}^2\to\mathbb{R}\cup\{\pm\infty\}$ defined by the rule
\begin{align}\label{OldBellman}
\Bell^+_{p;\eps} (x_1,x_2) = \sup\Set{\av{|\phi|^p}{I}}{\|\phi\|_{\BMO(I)} \leq \eps,\; \av{\phi}{I}=x_1,\; \av{\phi^2}{I}=x_2},\\
\Bell^-_{p;\eps} (x_1,x_2) = \inf\Set{\av{|\phi|^p}{I}}{\|\phi\|_{\BMO(I)} \leq \eps,\; \av{\phi}{I}=x_1,\; \av{\phi^2}{I}=x_2}.\label{OldBellman-}
\end{align}
The latter two functions were studied in detail in~\cite{SV}. We survey these results since they will play an important role in our study. 

\subsection{Description of~$\Bell_{p;\eps}^\pm$}

The domain of both functions~$\Bell^\pm_{p;\eps}$ is
\begin{equation*}
\Omega^2_\eps = \Set{(x_1,x_2) \in \mathbb{R}^2}{x_1^2 \leq x_2 \leq x_1^2+\eps^2}.
\end{equation*}
By the domain of a Bellman function we mean the set of~$x$ where the function is not equal to~$-\infty$. In other words, the set of functions~$\varphi$ over which we optimize in formulas~\eqref{OldBellman} and~\eqref{OldBellman-} is non-empty for these~$x$ (there exists at least one~$\varphi$ such that~$\av{\varphi}{I} = x_1$,~$\av{\varphi}{I} = x_2$, and~$\|\varphi\|_{\BMO(I)} \leq \eps$). Both functions also satisfy the boundary condition~$\Bell^\pm_{p;\eps}(t,t^2) = |t|^p$, $t \in \mathbb{R}$. 
From now on we omit the index~$\eps$ in the notation of domains and functions. 

To describe~$\Bell^\pm$, we need some auxillary functions.
For $p>1$ and $u \geq 0$ define
\eqlb{mp+}{
m_p(u) = \frac{p}{\eps}\int_{u}^{+\infty}\!\!\! e^{(u-t)/\eps} t^{p-1} dt,
}
\eqlb{mp-}{
k_p(u) = \ \frac{p}{\eps}\int_\eps^u e^{(t-u)/\eps} t^{p-1} dt.
}

For any $u \in \mathbb{R}$ we denote the segment connecting the points~$(u,u^2)$ with~$(u+\eps, (u+\eps)^2+\eps^2)$ by~$S_+(u)$ and the segment connecting~$(u,u^2)$ with~$(u-\eps, (u-\eps)^2+\eps^2)$ by~$S_-(u)$. Note that these segments touch upon the upper boundary of~$\Omega^2$, that is the parabola~$x_2=x_1^2+\eps^2$. For any $(x_1,x_2) \in \Omega^2$ there exist unique $u_\pm = u_\pm(x_1,x_2) \in \mathbb{R}$ such that $(x_1,x_2) \in S_{\pm}(u_\pm)$, $u_+\leq u_-$.

\newcommand{\Am}[1]{A_{m_#1}}
\newcommand{\Ak}[1]{A_{k_#1}}
Define the function $\Am{p}$ on $\Omega^2$ in the following way. We put 
\eqlb{eqAm}{
\begin{aligned}
\Am{p}(x) =& u^p + m_p(u) (x_1-u), \qquad &x \in S_+(u), \qquad u\geq 0,\\
\Am{p}(x) =& |u|^p - m_p(|u|) (x_1-u),\qquad &x \in S_-(u), \qquad u \leq 0.
\end{aligned}
} 
In the triangle between the tangents~$S_-(0)$ and~$S_+(0)$, we set
\eqlb{eqAmang}{
\Am{p}(x) = \frac{m_p(0)}{2\eps} x_2, \qquad |x_1|\leq \eps,\quad 2\eps |x_1| \leq x_2 \leq x_1^2+\eps^2.
} 
Formulas~\eqref{eqAm} and~\eqref{eqAmang} define the function $\Am{p}$ on the entire domain $\Omega^2$. Note that $\Am{p}$ is $C^1$-smooth and even with respect to~$x_1$.

Define the function $\Ak{p}$ on $\Omega^2$ as follows. We put 
\eqlb{eqAk}{
\begin{aligned}
\Ak{p}(x) =& u^p + k_p(u) (x_1-u), \qquad &x \in S_-(u), \qquad u &\geq \eps,\\
\Ak{p}(x) =& |u|^p - k_p(|u|) (x_1-u),\qquad &x \in S_+(u), \qquad u &\leq -\eps.
\end{aligned}
} 
In the domain~$x_2 \leq \eps^2$, we set
\eqlb{eqAkcup}{
\Ak{p}(x_1,x_2) = x_2^{p/2}, \qquad x_1^2\leq x_2 \leq \eps^2.
}
Formulas~\eqref{eqAk} and~\eqref{eqAkcup} define the function $\Ak{p}$ on the entire domain $\Omega^2$. This function is also $C^1$-smooth and even with respect to~$x_1$.

Now we are ready to describe the functions $\Bell^\pm$:
\eqlb{}{
\Bell^+_p = 
\begin{cases}
\Am{p}, & \text{if}\quad  2 \leq p< \infty\\
\Ak{p}, & \text{if}\quad 1 < p \leq 2
\end{cases}
\qquad
\text{and}
\qquad
\Bell^-_p = 
\begin{cases}
\Ak{p}, & \text{if}\quad  2 \leq p< \infty\\
\Am{p}, & \text{if}\quad 1 < p \leq 2.
\end{cases}
}

Here we collect some useful relations for derivatives of the functions $m_p$ and~$k_p$:
\eqlb{mpp+}{
m_p''(u) = \frac{p(p-1)(p-2)}{\eps}\int_{u}^{+\infty}\!\!\! e^{(u-t)/\eps} t^{p-3} dt,
}
\eqlb{mpp-}{
k_p''(u) = \ p(p-2)\eps^{p-3}e^{(\eps-u)/\eps} + \frac{p(p-1)(p-2)}{\eps}\int_{\eps}^{u}e^{(t-u)/\eps} t^{p-3} dt,
}
\eqlb{m-diff}{
-\eps m_p'(u)+ m_p(u)= pu^{p-1}, \qquad \eps k_p'(u)+ k_p(u)= pu^{p-1},
}
\eqlb{mpdiffnew}{
\eps \Big(m_p^{(\ell+1)} + k_p^{(\ell+1)}\Big) = m_p^{(\ell)} - k_p^{(\ell)},\qquad \ell \geq 0,\\
}
where the notation $g^{(k)}$ means the $k$-th derivative of $g$.

\subsection{Simple properties of the optimization problem}
The domain of the function~$\Bell_{p,r;\eps}$ introduced in~\eqref{NewBellman} is described in terms of the functions $\Bell^\pm_{p;\eps}$ from~\eqref{OldBellman}.
\begin{St}\label{Prop121201}
The set
\begin{equation*}
\Omega_\eps^3 = \Set{x \in \mathbb{R}^3}{(x_1,x_2)\in\Omega^2_\eps,\; x_3 \in \big[\Bell^-_{p;\eps}(x_1,x_2),\Bell^+_{p;\eps}(x_1,x_2)\big]},
\end{equation*}
is the domain of the Bellman function~$\Bell_{p,r;\eps}$.
\end{St}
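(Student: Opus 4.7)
The forward inclusion (domain of $\Bell_{p,r;\eps}\subseteq\Omega^3_\eps$) is immediate from the definitions: a test function $\vf$ for a triple $(x_1,x_2,x_3)$ in the sense of~\eqref{NewBellman} automatically satisfies the constraints in~\eqref{OldBellman} and~\eqref{OldBellman-} at the point $(x_1,x_2)$, so $(x_1,x_2)\in\Omega^2_\eps$ and $\Bell^-_{p;\eps}(x_1,x_2)\le x_3=\av{|\vf|^p}{I}\le\Bell^+_{p;\eps}(x_1,x_2)$.

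For the reverse inclusion, fix $(x_1,x_2,x_3)\in\Omega^3_\eps$. The plan is to interpolate between near-extremal test functions for $\Bell^-_{p;\eps}$ and $\Bell^+_{p;\eps}$. First I would pick $\vf^\pm$ satisfying $\av{\vf^\pm}{I}=x_1$, $\av{(\vf^\pm)^2}{I}=x_2$, $\|\vf^\pm\|_{\BMO(I)}\le\eps$ with $\av{|\vf^+|^p}{I}$ close to $\Bell^+_{p;\eps}(x_1,x_2)$ from below and $\av{|\vf^-|^p}{I}$ close to $\Bell^-_{p;\eps}(x_1,x_2)$ from above; their existence follows from the defining sup/inf, and concrete near-extremals are described in~\cite{SV}. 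Next, for each $\alpha\in[0,1]$, partition $I=I_1\sqcup I_2$ into consecutive sub-intervals with $|I_1|=\alpha|I|$, and form $\vf_\alpha$ by affinely transplanting $\vf^-$ onto $I_1$ and $\vf^+$ onto $I_2$. Additivity of integrals forces $\av{\vf_\alpha}{I}=x_1$ and $\av{\vf_\alpha^2}{I}=x_2$, while $\av{|\vf_\alpha|^p}{I}=\alpha\av{|\vf^-|^p}{I}+(1-\alpha)\av{|\vf^+|^p}{I}$ depends linearly, hence continuously, on $\alpha$ and sweeps through a range containing $x_3$; the intermediate value theorem then supplies an $\alpha$ with $\av{|\vf_\alpha|^p}{I}=x_3$.

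The delicate step, which I expect to be the main obstacle, is verifying $\|\vf_\alpha\|_{\BMO(I)}\le\eps$. The BMO bound is inherited from $\vf^\pm$ on any sub-interval $J$ contained in a single piece $I_i$; for $J$ straddling the junction, the decomposition $J=J_1\sqcup J_2$ with $J_i\subset I_i$ yields
\begin{equation*}
\var_J\vf_\alpha=\tfrac{|J_1|}{|J|}\var_{J_1}\vf_\alpha+\tfrac{|J_2|}{|J|}\var_{J_2}\vf_\alpha+\tfrac{|J_1||J_2|}{|J|^2}\big(\av{\vf_\alpha}{J_1}-\av{\vf_\alpha}{J_2}\big)^2,
\end{equation*}
so the cross-term must be controlled. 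Naive concatenation does not do this in general, and the resolution should exploit the explicit structure of extremal test functions from~\cite{SV}: one may select $\vf^\pm$ so that the entire concatenation takes values in a common interval of length at most $\eps$, whence Popoviciu's inequality forces $\var_J\vf_\alpha\le\eps^2/4$; or one may insert a short constant "bridge" at the junction and compensate by a small perturbation of $\alpha$ to restore the prescribed first two moments. The endpoint cases $x_3=\Bell^\pm_{p;\eps}(x_1,x_2)$ are then handled by a limiting argument based on the weak compactness of the family of near-optimizers with fixed first two moments.
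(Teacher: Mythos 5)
The forward inclusion and the overall plan (reduce to a mixture of extremal test functions at the two boundary values of $x_3$ over a fixed $(x_1,x_2)$) are fine, but the proof has a genuine gap exactly where you flag it: the $\BMO$ bound for the concatenated function $\vf_\alpha$ is never established, and neither of your proposed patches works. The Popoviciu route is impossible on most of the domain: a function taking values in an interval of length $\eps$ has $\av{\vf^2}{I}-\av{\vf}{I}^2\le\eps^2/4$, so no such choice of $\vf^\pm$ exists once $x_2-x_1^2>\eps^2/4$; moreover the extremizers of $\Bell^\pm_{p;\eps}$ from~\cite{SV} are logarithmic and unbounded, so they cannot be confined to any bounded interval at all. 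The ``constant bridge'' idea is also not a repair: the problematic intervals are those containing the whole junction region together with substantial pieces of both $I_1$ and $I_2$, and for these the cross-term $\frac{|J_1||J_2|}{|J|^2}\big(\av{\vf_\alpha}{J_1}-\av{\vf_\alpha}{J_2}\big)^2$ is controlled only if the two transplanted functions are arranged compatibly near the junction; nothing in your construction guarantees this, and for generic rearrangements the concatenation of two functions of $\BMO$-norm $\eps$ has strictly larger norm. In effect you are trying to reprove, by hand, a nontrivial ``mixing'' theorem, and the sketch does not do it.

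The paper avoids this by quoting exactly such a result: Corollary~3.13 of~\cite{SZ} states that if $\vf_1,\vf_2$ have $\BMO(I)$-norm at most $\eps$ and the segment joining their Bellman points $(\av{\vf_i}{I},\av{\vf_i^2}{I})$ lies in $\Omega^2_\eps$, then for every $\theta\in(0,1)$ there is a single $\vf$ with $\|\vf\|_{\BMO(I)}\le\eps$ whose distribution is the $\theta$-mixture of the distributions of $\vf_1,\vf_2$. One writes $x\in\Omega^3_\eps$ as $x=\theta y+(1-\theta)z$ with $y,z$ having the same first two coordinates $(x_1,x_2)$ and $y_3=\Bell^-_{p;\eps}(x_1,x_2)$, $z_3=\Bell^+_{p;\eps}(x_1,x_2)$; the segment condition is then trivially satisfied (it is a single point of $\Omega^2_\eps$), exact optimizers for $y,z$ are supplied by~\cite{SV} (so no near-extremal/weak-compactness limiting argument is needed), and the mixed function is a test function for $x$. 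If you want to keep your concatenation picture, you must either invoke this corollary (or the transference machinery behind it) or supply a genuine proof that the junction intervals obey the $\eps^2$ bound; as written, the argument does not close.
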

At this point we note that for $x \notin \Omega_\eps^3$ there are no test functions and we definitely have $\Bell_{p,r}(x) = -\infty$ in this case. On the other hand, a test function $\vf$ for any $x \in \Omega_\eps^3$ with $x_3 = \Bell^\pm_{p;\eps}(x_1,x_2)$ was constructed in~\cite{SV}. In other words, we have proved that the points on the boundary of~$\Omega_\eps^3$ belong to the domain of~$\Bell_{p,r}$. We will complete the proof of Proposition~\ref{Prop121201} after formulating Proposition~\ref{BasicProperties}. 

A function~$G\colon \omega \to \mathbb{R}\cup\{\pm \infty\}$, where~$\omega \subset \mathbb{R}^d$ is an arbitrary set, is called locally concave if for any segment~$\ell \subset \omega$, the restricted function~$G|_\ell$ is concave. 

We collect standard facts concerning Bellman functions of such kind.
\begin{St}\label{BasicProperties}
\begin{enumerate}
\item The function~$\Bell_{p,r}$ satisfies the boundary conditions on the skeleton of~$\Omega_\eps^3$\textup:
\eqlb{eqBCsceleton}{
\Bell_{p,r}(t,t^2,|t|^p) = |t|^r, \qquad t \in \mathbb{R}.
}
\item The function~$\Bell_{p,r}$ is locally concave on~$\Omega_{\eps}^3$.
\item The function~$\Bell_{p,r}$ is the pointwise minimal among all locally concave on~$\Omega_{\eps}^3$ functions~$G$ that satisfy the boundary condition~\eqref{eqBCsceleton}.
\end{enumerate}
\end{St}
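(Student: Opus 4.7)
The plan is to prove the three items in order by standard Bellman-function arguments, adapted to this three-dimensional setting.

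For item (1), if $x=(t,t^2,|t|^p)$ lies on the skeleton, any test function $\vf$ satisfies $\av{\vf}{I}=t$ and $\av{\vf^2}{I}=t^2$, so $\av{(\vf-t)^2}{I}=0$ and $\vf\equiv t$ a.e. This is the unique test function, it has BMO norm $0\leq\eps$ (thus the skeleton indeed lies in $\Omega^3_\eps$, which resolves one half of Proposition~\ref{Prop121201} as well), and it yields $\av{|\vf|^r}{I}=|t|^r$, giving~\eqref{eqBCsceleton}.

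For item (2), I would use the usual concatenation of test functions. Given a segment $\ell\subset\Omega^3_\eps$ with endpoints $y^{(1)},y^{(2)}$, a point $y=\alpha y^{(1)}+(1-\alpha)y^{(2)}$, and test functions $\vf_1,\vf_2$ on $I$ for $y^{(1)},y^{(2)}$, split $I=I'\cup I''$ with $|I'|/|I|=\alpha$, rescale $\vf_i$ to live on $I'$ and $I''$ respectively, and glue them into $\vf$ on $I$. Then $\av{\vf}{I}$, $\av{\vf^2}{I}$, $\av{|\vf|^p}{I}$ are the $\alpha$-convex combination of the corresponding averages of $\vf_1,\vf_2$, so they equal the coordinates of $y$; similarly $\av{|\vf|^r}{I}=\alpha\av{|\vf_1|^r}{I}+(1-\alpha)\av{|\vf_2|^r}{I}$. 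The BMO constraint $\|\vf\|_{\BMO(I)}\leq\eps$ is inherited on subintervals of $I'$ or $I''$ and, on subintervals crossing the split, reduces to the condition that the projection $(x_1,x_2)$ of the segment $\ell$ stays inside $\Omega^2_\eps$, which is automatic. Taking suprema over $\vf_1,\vf_2$ yields $\Bell_{p,r}(y)\geq\alpha\Bell_{p,r}(y^{(1)})+(1-\alpha)\Bell_{p,r}(y^{(2)})$.

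For item (3), I would run the Bellman induction backwards. Let $G$ be any locally concave function on $\Omega^3_\eps$ satisfying~\eqref{eqBCsceleton}, fix $x\in\Omega^3_\eps$ and a test function $\vf$. Bisect $I$ dyadically into intervals $I_n^{(j)}$ of generation $n$, and set $y_n^{(j)}=(\av{\vf}{I_n^{(j)}},\av{\vf^2}{I_n^{(j)}},\av{|\vf|^p}{I_n^{(j)}})$, which lies in $\Omega^3_\eps$ because $\vf\big|_{I_n^{(j)}}$ is itself a test function. The three parent-child averages are collinear, so applying local concavity of $G$ at each bisection and iterating gives
\[
G(x)\;\geq\;\frac{1}{|I|}\sum_{j=1}^{2^n}|I_n^{(j)}|\,G(y_n^{(j)}).
\]
By the Lebesgue differentiation theorem, for a.e. $s\in I$ the averages $y_n^{(j(s,n))}$ converge to the skeleton point $(\vf(s),\vf(s)^2,|\vf(s)|^p)$ where $G$ equals $|\vf(s)|^r$. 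Passing to the limit yields $G(x)\geq\av{|\vf|^r}{I}$; taking the supremum over $\vf$ gives $G(x)\geq\Bell_{p,r}(x)$.

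The main obstacle is the last limit passage: local concavity alone gives only an upper majorant on $G$, not the lower control needed for Fatou. The standard remedy is to first truncate $\vf$ (so that all $y_n^{(j)}$ stay in a fixed compact sub-polytope of $\Omega^3_\eps$ where $G$ is bounded above and locally Lipschitz by concavity), apply the inequality there, and then remove the truncation using monotone convergence as the truncation level grows; the $\BMO$ Bellman framework of~\cite{Memoirs} and~\cite{SZ} provides the technical tools needed to carry out this approximation cleanly.
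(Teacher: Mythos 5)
Your items (1) and (3) match the paper. Item (1) is exactly the paper's observation: the constraint $\av{\vf^2}{I}=\av{\vf}{I}^2$ forces $\vf$ to be constant. Item (3) is the standard Bellman induction, which the paper dispatches by citing \cite{SZ}; your sketch of the dyadic bisection, Lebesgue differentiation, and truncation is the usual way this is written out, and the caveat about the Fatou step is appropriately flagged.

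Item (2) is where you have a genuine gap. The naive gluing of rescaled copies of $\vf_1$ and $\vf_2$ onto $I'$ and $I''$ does \emph{not} preserve the BMO bound. For a subinterval $J=J'\cup J''$ straddling the split point, the pair $(\av{\vf}{J},\av{\vf^2}{J})$ is a convex combination, weighted by $|J'|/|J|$, of $(\av{\vf}{J'},\av{\vf^2}{J'})$ and $(\av{\vf}{J''},\av{\vf^2}{J''})$. Each of the two latter points lies in $\Omega^2_\eps$, but $\Omega^2_\eps$ is a parabolic strip, which is \emph{not} convex, so the combination may well leave it. Moreover, as $J$ grows these crossing points do not sweep out the projection of the segment $\ell$ (that projection is the segment between $(y^{(1)}_1,y^{(1)}_2)$ and $(y^{(2)}_1,y^{(2)}_2)$, whereas $(\av{\vf}{J'},\av{\vf^2}{J'})$ and $(\av{\vf}{J''},\av{\vf^2}{J''})$ are moving averages of $\vf_1,\vf_2$ near the split, which are in general far from $y^{(1)}$ and $y^{(2)}$). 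So the claim that control on crossing intervals ``reduces to the condition that the projection of $\ell$ stays in $\Omega^2_\eps$'' is simply false. This naive concatenation is only valid for the \emph{dyadic} BMO Bellman function, where the sole interval crossing the bisection is $I$ itself. For the genuine (non-dyadic) BMO setting, the paper instead cites Corollary~3.13 of \cite{SZ}, a nontrivial existence result: given two BMO functions with norm $\leq\eps$ whose associated $\Omega^2_\eps$-points are joined by a segment inside $\Omega^2_\eps$, one can produce a single function with BMO norm $\leq\eps$ whose distribution is the prescribed convex combination. Producing that function requires a more careful construction (monotone rearrangements and a delicate gluing), not a straightforward concatenation. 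Without this input, the local concavity of $\Bell_{p,r}$ is not established.
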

The first statement of Proposition~\ref{BasicProperties} follows from the fact that if $\av{\vf^2}{} = \av{\vf}{}^2$, then $\vf$ is constant function and $\av{|\vf|^r}{} = |\av{\vf}{}|^r$. 

The second one is not so trivial. It is a consequence of the following fact (see Corollary 3.13 in~\cite{SZ}): if $\vf_i \in \BMO(I)$ with $\|\vf_i\|_{\BMO(I)}\leq \eps$, $i=1,2$, and the segment with the endpoints $(\av{\vf_i}{I},\av{\vf_i^2}{I})$ lies in $\Omega_\eps^2$, then for any $\theta \in (0,1)$ there exists a function $\vf \in \BMO(I)$ such that 
\begin{equation*}
\big|\{s \in I\colon \vf(s)>\lambda \}\big| = \theta \cdot \big|\{s \in I\colon \vf_1(s)>\lambda \}\big| +(1-\theta)\cdot \big|\{s \in I\colon \vf_2(s)>\lambda \}\big|, \ \lambda \in \mathbb{R},\quad \hbox{and}\  \|\vf\|_{\BMO(I)}\leq \eps.
\end{equation*}

This fact also leads to the existence of a test function for any $x \in \Omega_\eps^3$. Indeed, any~$x\in \Omega_\eps^3$ may be represented as a convex combination~$x=\theta y + (1-\theta)z$, where~$y$ and~$z$ lie on the boundary of~$\Omega_\eps^3$ and~$x,y$ and~$z$ have one and the same first two coordinates. By the results of~\cite{SV}, we know that there exist test functions~$\varphi_1$ and~$\varphi_2$ for~$y$ and~$z$ correspondingly. Application of Corollary 3.13 from~\cite{SZ} cited in the previous paragraph to~$\varphi_1$ and~$\varphi_2$ produces a test function for~$x$ and completes the proof of Proposition~\ref{Prop121201}.

The third statement of Proposition~\ref{BasicProperties} is usual for the Bellman function technique and is proved by the so-called Bellman induction, see e.g.~\cite{SZ}.


In view of Proposition~\ref{BasicProperties}, it suffices to construct a $C^1$-smooth function~$G \colon\Omega^3_{\eps}\to\mathbb{R}$ such that
\begin{enumerate}[1)]
	\item the function $G$ is locally concave on~$\Omega^3_\eps$;
	\item the function $G$ fulfills the boundary conditions~\eqref{eqBCsceleton};
	\item for any point~$x \in \Omega^3_\eps$ with $x_3 = \Bell^\pm_{p;\eps}(x_1,x_2)$, there is a function~$\phi_x \in \BMO(I)$ such that 
\eqlb{eqphi_x}{
\|\phi_x\|_{\BMO(I)} \leq \eps,\; \av{\phi_x}{I}=x_1,\; \av{\phi_x^2}{I}=x_2,\; \av{|\phi_x|^p}{I}=x_3,\; \av{|\phi_x|^r}{I}=G(x);
}	
	\item for any point~$x \in \Omega^3_\eps$, there exists a two-dimensional plane $L[x] \subset \mathbb{R}^3$, $x \in L[x]$, such that $G$ is linear on the connected component of $L[x] \cap \Omega^3_\eps$ containing $x$.
\end{enumerate}
If all of the above requirements hold, then,~$G =\Bell_{p,r}$. Indeed, the inequality $G(x)\geq \Bell_{p,r}(x), x \in \Omega^3_\eps,$ follows from conditions 1), 2) and the third statement of Proposition~\ref{BasicProperties}. The reverse inequality  $G(x)\leq \Bell_{p,r}(x)$ for $x \in \Omega^3_\eps$ with $x_3 = \Bell^\pm_{p;\eps}(x_1,x_2)$ follows from condition 3) and the definition of the Bellman function~$\Bell_{p,r}$. For other $x \in \Omega^3_\eps$, the inequality~$G(x) \leq \Bell_{p,r}(x)$ is implied by condition 4): $G$ is linear on $L[x]$ while $\Bell_{p,r}$ is concave there. We will provide more details in Subsection~\ref{Sub34}.

A function~$\varphi_x$ satisfying~\eqref{eqphi_x} is called an optimizer for~$G$ at~$x$.

\section{Solution to the optimization problem}\label{SectG}
Our aim is to construct the function~$G$ on~$\Omega_\eps^3$ described at the end of the previous section. 
We split~$\Omega_\eps^3$ into three subdomains $\Xi_+, \Xi_0,\Xi_-$: 
\eqlb{defXi}{
\begin{split}
\Xi_0 &= \Set{x \in \Omega_\eps^3}{|x_1|\leq 2\eps,\; x_2 \geq 4\eps|x_1|-3\eps^2\!,\; (p-2)\big(x_3 - \eps^p - \frac{x_2-\eps^2}{4\eps}m_p(\eps)\big)\geq 0},\\
\Xi_+ &= \{x \in \Omega_\eps^3 \setminus \Xi_0 \colon x_1>0\},\\
\Xi_- &= \{x \in \Omega_\eps^3 \setminus \Xi_0 \colon x_1<0\}.
\end{split}
}
The latter condition defining $\Xi_0$ may look strange. However, it is needed to distinguish the cases $p\geq 2$ and $p<2$. 
  
We will construct~$G$ on each of these domain by an individual formula, verify the local concavity on each of the domains, and also prove that the three parts provide a~$C^1$-smooth function on the union of the domains. This will lead to a locally concave $C^1$-smooth function~$G$ on~$\Omega_\eps^3$.

\subsection{Construction on~$\Xi_+$}
Let $u \geq \eps$. 
Consider the two-dimensional plane $L_u$ that passes through~$U = (u,u^2,u^p)$ and the points
\eqlb{Upm}{
\begin{split} 
U_+ =& \Big(u+\eps,(u+\eps)^2+\eps^2, \Am{p}\big(u+\eps,(u+\eps)^2+\eps^2\big)\Big) = \Big(u+\eps,(u+\eps)^2+\eps^2, u^p+\eps m_p(u)\Big),\\
U_- =& \Big(u-\eps,(u-\eps)^2+\eps^2, \Ak{p}\big(u-\eps,(u-\eps)^2+\eps^2\big)\Big) =\Big(u-\eps,(u-\eps)^2+\eps^2, u^p-\eps k_p(u)\Big).
\end{split}
}
The equation of~$L_u$ is
\begin{equation}\label{planeeq1}
x_3 = u^p + \frac{m_p-k_p}{4\eps}\cdot\big(x_2-x_1^2+(x_1-u)^2\big)+\frac{m_p + k_p}{2}\cdot(x_1-u).
\end{equation}
Here and in what follows, we omit the argument of~$m_p$ and~$k_p$ if this does not lead to ambiguity.

Let $T_u$ be the intersection of $\Omega^3_\eps$ with the triangle with the vertices~$U,U_-, U_+$. So, $T_u$ is a curvilinear triangle. 
We define the function~$G$  on $T_u$ by linearity:
\begin{equation}\label{GDef1}
G(x_1,x_2,x_3) = u^r + \frac{m_r-k_r}{4\eps}\cdot\big(x_2-x_1^2+(x_1-u)^2\big)+\frac{m_r + k_r}{2}\cdot(x_1-u).
\end{equation}

Note that equations~\eqref{planeeq1} and~\eqref{GDef1} completely define~$G$ on~$\Xi_+$ since the latter domain is foliated by the triangles $T_u$, when $u$ runs through~$(\eps,+\infty)$. Let us prove this. The triangle $T_\eps$ is simply the boundary between $\Xi_0$ and $\Xi_+$. Recall that for any $x \in \Xi_+$ there exist unique $u_\pm \in \mathbb{R}$ such that $(x_1,x_2) \in S_{\pm}(u_\pm)$. For $x_1,x_2$ fixed we will verify that $x_3$ defined by~\eqref{planeeq1} is a monotone function of $u\in [\max(\eps,u_+),u_-]$  (see~\eqref{eq021201} further). If $u_+\geq  \eps$, then $x_3$ as a function of $u$ runs from $\Am{p}(x_1,x_2)$ to $\Ak{p}(x_1,x_2)$ when $u \in [u_+,u_-]$. If $u_+ \leq \eps$, then it runs from 
$\eps^p + \frac{x_2-\eps^2}{4\eps}m_p(\eps)$ to $\Ak{p}(x_1,x_2)$ when $u \in [\eps,u_-]$. We have proved that the~$T_u$,~$u \in (\eps,+\infty)$, foliate~$\Xi_+$.
 

In order to show the local concavity of~$G$, let us verify that the Hessian of~$G$ is either non-positive or non-negative on the entire domain~$\Xi_+$ (depending on $p$ and $r$). The restrictions of this function to the planes~$L_u$, which are always transversal to the~$x_3$ axis, are linear. Thus, it suffices to show that the second derivative of~$G$ with respect to~$x_3$ does not change its sign in~$\Xi_+$.

We differentiate~\eqref{planeeq1} with respect to~$x_3$ and get
\begin{equation*}
\begin{split}
1 = u_{x_3} \Bigg(&p u^{p-1} +  \frac{m_p'-k_p'}{4\eps}\cdot\big(x_2-x_1^2+(x_1-u)^2\big) - \\
&-\frac{m_p-k_p}{2\eps}\cdot(x_1-u)
+\frac{m_p' + k_p'}{2}\cdot(x_1-u)-\frac{m_p + k_p}{2} \Bigg),
\end{split}
\end{equation*}
which, with the help of~\eqref{m-diff}, may be restated as
\begin{equation}\label{ux3}
\begin{split}
1 = u_{x_3} \cdot\frac{m_p'-k_p'}{4\eps} \cdot\big(x_2-x_1^2+(x_1-u)^2 - 2\eps^2 \big)\stackrel{\hbox{\tiny \eqref{mpdiffnew}}}{=}\\
 =u_{x_3} \cdot\frac{m_p''+k_p''}{4} \cdot\big(x_2-x_1^2+(x_1-u)^2 - 2\eps^2 \big).
\end{split}
\end{equation}
Similarly, we differentiate~\eqref{GDef1} with respect to~$x_3$ and get
\begin{equation}\label{Gx3pre}
G_{x_3}(x_1,x_2,x_3) =  u_{x_3} \cdot\frac{m_r''+k_r''}{4} \cdot\big(x_2-x_1^2+(x_1-u)^2 - 2\eps^2 \big).
\end{equation}
Equations~\eqref{ux3} and~\eqref{Gx3pre} imply
\begin{equation}\label{Gx3}
G_{x_3}(x_1,x_2,x_3) =  \frac{m_r''+k_r''}{m_p''+k_p''}.
\end{equation}

It follows from~\eqref{mpp+} and~\eqref{mpp-} that
\eqlb{eq031202}{
\sign\big(m_p''(u)\big) = \sign\big(k_p''(u)\big) = \sign(p-2)
} 
when $u\geq \eps$. We have $x_2-x_1^2+(x_1-u)^2 - 2\eps^2 \leq 0$ for $x \in T_u$. Thus,~\eqref{ux3} implies
\eqlb{eq021201}{
\sign(u_{x_3}) = -\sign(p-2), \qquad x \in T_u.
} 
Therefore,
\eqlb{eq031201}{
\sign\big(G_{x_3x_3}\big) = -\sign\big((p-2)G_{x_3u}\big).
}
Let us compute that latter sign, using formulas~\eqref{Gx3} and~\eqref{mpdiffnew}:
\eqlb{Gx3u}{
\begin{split}
G_{x_3u}(x) &=  \frac{\big(m_r''-k_r''\big)\big(m_p''+k_p''\big) - \big(m_p''-k_p''\big)\big(m_r''+k_r''\big)}{\eps\big(m_p''+k_p''\big)^2}=\\
&=\frac{2\big(m_r''k_p'' - m_p''k_r''\big)}{\eps\big(m_p''+k_p''\big)^2}
= \frac{2k_r''k_p'' \Big(\frac{m_r''}{k_r''} - \frac{m_p''}{k_p''}\Big)}{\eps\big(m_p''+k_p''\big)^2}.
\end{split}
}
Thus, we need to investigate the sign of the expression~$\frac{m_r''}{k_r''} - \frac{m_p''}{k_p''}$. Without loss of generality, we may assume~$\eps=1$ (we may always substitute~$\tilde u = u/\eps$). The following notation is convenient:
\begin{align*}
I_1 =& \int_u^{+\infty}e^{u-t}t^{p-3}dt, \phantom{\int_1^u} \qquad \tilde I_1 =\frac{\partial}{\partial p} I_1 = \int_u^{+\infty}e^{u-t}t^{p-3}\log t \,dt\\
I_2 =& \int_1^u e^{t-u}t^{p-3}dt, \phantom{\int_u^{+\infty}}\qquad \tilde I_2 =\frac{\partial}{\partial p} I_2 = \int_1^u e^{t-u}t^{p-3}\log t \,dt.
\end{align*}
Consider the derivative
\begin{equation}\label{eq2}
\begin{split}
\frac{\partial}{\partial p} &\frac{m_p''}{k_p''} = \frac{\partial}{\partial p}\left(\frac{(p-1)I_1}{e^{1-u}+(p-1)I_2}\right)= \\
&=\frac{\big(I_1+(p-1)\tilde I_1 \big)\big(e^{1-u}+(p-1)I_2\big)- (p-1)I_1 \big(I_2+(p-1)\tilde I_2\big)}{(e^{1-u}+(p-1)I_2)^2}=\\
&=\frac{\big(I_1+(p-1)\tilde I_1 \big) e^{1-u}+(p-1)^2\big(\tilde I_1 I_2- \tilde I_2 I_1\big)}{(e^{1-u}+(p-1)I_2)^2}.\\
\end{split}
\end{equation}
Note that the quantities $I_1,I_2,\tilde I_1, \tilde I_2$ are non-negative. Moreover, the inequalities
\begin{equation*}
\tilde I_1 >  I_1 \log u, \qquad \tilde I_2 \leq I_2 \log u
\end{equation*}
hold true. Therefore, the expression in~\eqref{eq2} is non-negative. Consequently, 
\eqlb{eq031203}{
\sign\left(\frac{m_r''}{k_r''} - \frac{m_p''}{k_p''}\right)=\sign(r-p).
}
Finally, we investigate the sign of $G_{x_3x_3}$:
\begin{align*}
\sign\big(G_{x_3x_3}\big) &\stackrel{\hbox{\tiny \eqref{eq031201}}}{=} -\sign\big((p-2)G_{x_3u}\big)\\
&\stackrel{\hbox{\tiny \eqref{Gx3u}}}{=}
-\sign(p-2)\cdot\sign\big(k_r''\big)\cdot\sign\big(k_p''\big) \cdot\sign\Big(\frac{m_r''}{k_r''} - \frac{m_p''}{k_p''}\Big)\\
&\stackrel{\hbox{\tiny \eqref{eq031202}, \eqref{eq031203}}}{=} \sign(r-2)\cdot\sign(p-r).
\end{align*}
 
Thus, the constructed function~$G$ is locally concave on~$\Xi_+$ provided~$(r-2)(p-r)<0$ and locally convex if~$(r-2)(p-r)>0$. 

By symmetry, we define the function~$G$ on~$\Xi_-$:
\begin{equation}\label{Symmetry}
G(x_1,x_2,x_3)=G(-x_1,x_2,x_3),\quad x\in \Xi_-.
\end{equation} 
Thus, the concavity (or convexity) of this symmetrized function on~$\Xi_-$ is the same as on $\Xi_+$.

\subsection{Construction on~$\Xi_0$}
The point~$U =(u,u^2,u^p)$ lies on the skeleton of~$\Omega_\eps^3$ for any~$u \in [0,\eps]$. Let~$\tilde{L}_u$ be the two dimensional plane that passes through~$U$,~$U_+ = \big(u+\eps,(u+\eps)^2+\eps^2, u^p+\eps m_p(u)\big)$, and~$\bar U = (-u,u^2,u^p)$. Note that the segments connecting~$U$ with $U_\pm$ lie on the boundary of~$\Omega^3_{\eps}$. 

The plane $\tilde L_u$ is defined by the equation
\eqlb{planeeq2}{
x_3 = u^p + \frac{x_2-u^2}{2(u+\eps)}m_p(u).
}
It contains the point $\bar U_+$ that is symmetric to~$U_+$ with respect to the coordinate plane~$x_1=0$. Let $\tilde T_u$ be the intersection of $\Omega^3_\eps$ with the quadrilateral with the vertices~$U, U_+, \bar U$ and $\bar U_+$. So, $\tilde T_u$ is a curvilinear quadrilateral for any $u \in (0,\eps)$. We define the function~$G$ by linearity on $\tilde T_u$:
\begin{equation}\label{GDef2}
G(x_1,x_2,x_3) = u^r + \frac{x_2-u^2}{2(u+\eps)} m_r(u), \qquad x\in \tilde T_u.
\end{equation}

We state that the domain $\Xi_0$ is foliated by $\tilde T_u$, $u \in [0,\eps]$. Let us show this. We first note that $\tilde T_\eps$ is the common boundary of $\Xi_0$ and $\Xi_-\cup\Xi_+$.  Recall that for any $x \in \Xi_0$ there exist unique $u_\pm \in \mathbb{R}$ such that $(x_1,x_2) \in S_{\pm}(u_\pm)$. For $x_1,x_2$ fixed we will verify that $x_3$ defined by~\eqref{planeeq2} is a monotone function of $u\in [\max(0,u_+), \min(\sqrt{x_2},\eps)]$  (see~\eqref{eq091201} further). If $x_2\leq  \eps^2$, then $x_3$ as a function of $u$ runs from $\Am{p}(x_1,x_2)$ to $\Ak{p}(x_1,x_2)$ when $u \in [\max(0,u_+), \sqrt{x_2}]$. If $x_2 \geq \eps^2$, then it runs from  
$\Am{p}(x_1,x_2)$ to $\eps^p + \frac{x_2-\eps^2}{4\eps}m_p(\eps)$ when $u \in [\max(0,u_+), \eps]$. 


Let us verify that~$G$ is either locally concave or locally convex on the entire domain~$\Xi_0$ (depending on $p$ and $r$). Similar to the previous subsection, it suffices to investigate the sign of~$G_{x_3x_3}$.

We differentiate~\eqref{planeeq2} with respect to~$x_3$ and obtain
\begin{equation}\label{ux32}
\begin{split}
1 = &u_{x_3} \left(pu^{p-1}+\frac{-2u(u+\eps)-(x_2-u^2)}{2(u+\eps)^2}\cdot m_p +\frac{x_2-u^2}{2(u+\eps)}\cdot m_p' \right)\stackrel{\hbox{\tiny \eqref{m-diff}}}{=}\\
=&u_{x_3} \left(pu^{p-1}+\frac{-2u(u+\eps)-(x_2-u^2)}{2(u+\eps)^2}\cdot\big(\eps m_p' +pu^{p-1}\big)+\frac{x_2-u^2}{2(u+\eps)}\cdot m_p' \right)=\\
=&u_{x_3} \cdot\frac{-2\eps(u+\eps)+(x_2-u^2)}{2(u+\eps)^2}\cdot u\cdot\big( m_p' -p u^{p-2}\big) \stackrel{\hbox{\tiny \eqref{m-diff}}}{=}\\
=&u_{x_3} \cdot\frac{x_2-(u+\eps)^2-\eps^2}{2(u+\eps)^2}\cdot u \cdot\big(\eps m_p''  + p(p-2) u^{p-2}\big).
\end{split}
\end{equation}

Similarly,~\eqref{GDef2} leads to
\begin{equation}\label{Gx32pre}
\begin{split}
G_{x_3} = u_{x_3} \frac{-2\eps(u+\eps)+(x_2-u^2)}{2(u+\eps)^2}\cdot u\cdot \big(m_r'-r u^{r-2}\big).
\end{split}
\end{equation}

Formulas~\eqref{ux32} and~\eqref{Gx32pre} imply
\begin{equation}\label{Gx32}
\begin{split}
G_{x_3} = \frac{m_r'-r u^{r-2}}{m_p'-p u^{p-2}}.
\end{split}
\end{equation}

For $x$ in $\tilde T_u$ we have $x_2-(u+\eps)^2-\eps^2\leq 0$. By~\eqref{mpp+}, $\sign(m_p'') = \sign(p-2)$. Therefore,
formula~\eqref{ux32} implies 
\eqlb{eq091201}{
\sign(u_{x_3}) = \sign(2-p).
} 
We obtain
\eqlb{eq091202}{
\sign(G_{x_3 x_3}) = \sign(2-p)\sign(G_{x_3u}).
}
Formula~\eqref{Gx32} implies
\begin{gather*}
G_{x_3u}(x) =  \frac{\big(m_r''-r(r-2)u^{r-3}\big)\big(m_p'-pu^{p-2}\big) - \big(m_p''-p(p-2)u^{p-3}\big)\big(m_r'-ru^{r-2}\big)}{\big(m_p'-pu^{p-2}\big)^2}\stackrel{\hbox{\tiny \eqref{m-diff}}}{=}\\
=\frac{\big(m_r''-r(r-2)u^{r-3}\big)\big(\eps m_p'' + p(p-2) u^{p-2}\big) - \big(m_p''-p(p-2)u^{p-3}\big)\big(\eps m_r'' + r(r-2) u^{r-2}\big)}{\big(m_p'-pu^{p-2}\big)^2}=\\
=\frac{(u+\eps) \big(p(p-2)u^{p-3}m_r''-r(r-2)u^{r-3}m_p''\big)}{\big(m_p'-pu^{p-2}\big)^2}=\\
=\frac{(u+\eps) p(p-2)r(r-2)u^{p+r-5}}{\big(m_p'-pu^{p-2}\big)^2} \left(\frac{m_r''}{r(r-2)u^{r-2}}-\frac{m_p''}{p(p-2)u^{p-2}}\right).
%
%
\end{gather*}

We compute the derivative of the latter expression to investigate its sign:
\begin{equation*}
\begin{split}
\frac{\partial}{\partial p} \left(\frac{\eps  m_p''}{ p(p-2)u^{p-2}}\right) &\stackrel{\hbox{\tiny \eqref{mpp+}}}{=} \frac{\partial}{\partial p} \left( \frac{p-1}{u^{p-2}} \int_u^{+\infty} e^{u-t}t^{p-3}dt\right) =\\
&\stackrel{\hbox{\tiny t=us}}{=} \frac{\partial}{\partial p} \left( (p-1)\int_1^{+\infty} e^{u(1-s)}s^{p-3}ds\right)=\\
&=\int_1^{+\infty} e^{u(1-s)}s^{p-3}ds+ (p-1)\int_1^{+\infty} e^{u(1-s)}s^{p-3} \log s\, ds>0.
\end{split}
\end{equation*}
Consequently,~$\sign(G_{x_3u})=\sign((r-2)(p-2)(r-p))$ and by~\eqref{eq091202}
$\sign(G_{x_3x_3})=\sign((r-2)(p-r))$.

Thus, the constructed function~$G$ is locally concave on~$\Xi_0$ provided~$(r-2)(p-r)<0$ and locally convex if~$(r-2)(p-r)>0$. 

\subsection{Concatenations}
We have defined~$G$ on three subsets of~$\Omega_\eps^3$, namely on~$\Xi_+, \Xi_0$, and~$\Xi_-$. Now we verify that the constructed function is defined on the entire domain~$\Omega_\eps^3$ and is~$C^1$-smooth. Due to symmetry, we may study the part of~$\Omega_\eps^3$ where~$x_1>0$ only.

Note that the planes~$L_u$ (see~\eqref{planeeq1}) and~$\tilde L_u$ (see~\eqref{planeeq2}) coincide when~$u=\eps$ since~$k_p(\eps)=0$ by~\eqref{mp-}. Similarly, the values of~$G$ on that common plane delivered by formulas~\eqref{GDef1} and~\eqref{GDef2} coincide since~$k_r(\eps)=0$. Thus, we have shown that~$G$ is correctly defined on~$\Omega_\eps^3$ and is continuous on this domain.

To show that~$G$ is~$C^1$ smooth, it suffices to verify that the limits of~$G_{x_3}(x)$ as~$x$ approaches~$L_\eps$ from different sides, coincide. Indeed, once we proved this, the other directional derivatives will glue continuously since~$G$ is linear on~$L_{\eps}$. By virtue of~\eqref{Gx3} and~\eqref{Gx32}, we need to prove
\begin{equation*}
\frac{m_r''(\eps)+k_r''(\eps)}{m_p''(\eps)+k_p''(\eps)} = \frac{m_r'(\eps)-r \eps^{r-2}}{m_p'(\eps)-p \eps^{p-2}}.
\end{equation*}
This may be done as follows: 
\begin{equation*}
\frac{m_r''(\eps)+k_r''(\eps)}{m_p''(\eps)+k_p''(\eps)}  \stackrel{\hbox{\tiny \eqref{mpdiffnew}}}{=} 
\frac{m_r'(\eps)-k_r'(\eps)}{m_p'(\eps)-k_p'(\eps)}=
\frac{m_r'(\eps)-r \eps^{r-2}}{m_p'(\eps)-p \eps^{p-2}},
\end{equation*}
where in the last identity we have used that~$k_p'(\eps)=p\eps^{p-2}-\frac{1}{\eps}k_p(\eps)=p\eps^{p-2}$, which is true by~\eqref{m-diff} and~\eqref{mp-}.

To summarize, we have proved that~$G$ is~$C^1$ smooth. Therefore, its local concavity/convexity on the parts~$\Xi_0,\Xi_\pm$ of~$\Omega_\eps^3$ implies its local concavity/convexity on the entire domain~$\Omega_\eps^3$.

\subsection{Optimizers}\label{Sub34}

In the previous section, we have constructed a locally concave on~$\Omega_\eps^3$ function~$G$. Let us verify that it coincides with~$\Bell_{p,r}$. For that, it suffices, given arbitrary~$x\in \Omega_\eps^3$, to construct a function~$\varphi_x$ satisfying~\eqref{eqphi_x}. Such functions are usually called optimizers. We will reason in a slightly different way. Namely, we will construct the optimizers for some specific points on the boundary of~$\Omega_\eps^3$, and then, using a concavity argument show that $G = \Bell_{p,r}$ on the entire domain $\Omega_\eps^3$.

First, we construct the optimizers for the vertices of the curvilinear triangle~$T_u$,~$u \in [\eps,+\infty)$, (see~\eqref{Upm}). The constant function $\vf \equiv u$ is an optimizer for the point $U = (u,u^2,|u|^p)$. The functions
\eqlb{eq101201}{
\begin{gathered}
\phi_{U_+}(t) = -\eps\ln t + u, \qquad t \in I = (0,1];\\
\phi_{U_-}(t) = -\eps\chi_{[0,1/2)}(t) + \eps\chi_{[1/2,1)}(t) + \eps(1+\ln t)\chi_{[1,e^{\frac{u-\eps}{\eps}}]}(t), \qquad t \in I = [0,e^{\frac{u-\eps}{\eps}}]
\end{gathered}
}
are the optimizers for the points~$U_{\pm}$ (see~\cite{SV}). One may verify that $\|\phi_{U_\pm}\|_{\BMO(I)}=\eps$ and
\begin{align*}
\av{\phi_{U_\pm}}{I} = u \pm \eps,& \qquad \av{\phi_{U_\pm}^2}{I}= (u\pm \eps)^2+\eps^2,\\ 
\av{|\phi_{U_+}|^p}{I}=\Am{p}(u+\eps, (u+\eps)^2+\eps^2),&\qquad 
\av{|\phi_{U_+}|^r}{I}=\Am{r}(u+\eps, (u+\eps)^2+\eps^2)=G(U_+),\;\\
\av{|\phi_{U_-}|^p}{I}=\Ak{p}(u-\eps, (u-\eps)^2+\eps^2),&\qquad
\av{|\phi_{U_-}|^r}{I}=\Ak{r}(u-\eps, (u-\eps)^2+\eps^2)=G(U_-).
\end{align*}

The function $G$ satisfies the boundary condition on the skeleton. If $(r-2)(p-r)<0$, then the function $G$ is locally concave on $\Omega_\eps^3$, therefore, $\Bell_{p,r} \leq G$ pointwise. For $U_+$ and $U_-$ we have $G(U_\pm) = \av{|\phi_{U_\pm}|^r}{} \leq \Bell_{p,r}(U_\pm)$. Thus, $\Bell_{p,r}(U_\pm) = G(U_\pm)$. The function $G$ is linear on $T_u$, while $\Bell_{p,r}$ is locally concave on it. Therefore, $G \leq \Bell_{p,r}$ on $T_u$. So, we have proved that $\Bell_{p,r} = G$ on $T_u$ for $u \in [\eps,+\infty)$. Due to symmetry, $\Bell_{p,r} = G$ on $\Xi_-$.

In the similar way, one may verify that $\Bell_{p,r} = G$ on $\tilde T_u$ for $u \in [0,\eps]$. Indeed, for the vertices of $\tilde T_u$ we have the same optimisers, therefore $\Bell_{p,r} = G$ at all the vertices. Similar to the reasoning in the previous paragraph, $\Bell_{p,r}$ is locally concave on $\tilde T_u$, while $G$ is linear on it. Consequently, $G \leq \Bell_{p,r}$ on $\tilde T_u$, and so $\Bell_{p,r} = G$ there. So, we have proved that $\Bell_{p,r} = G$ on $\Omega_\eps^3$ entirely and have finally proved Theorem~\ref{BellmanTheorem}.

\section{Lower Bellman function}\label{SecLowBel}
One may also consider the lower Bellman function 
\begin{equation}
\Bell^{\mathrm{min}}_{p,r;\eps} (x_1,x_2,x_3) = \inf\Set{\av{|\phi|^r}{I}}{\|\phi\|_{\BMO(I)} \leq \eps,\; \av{\phi}{I}=x_1,\; \av{\phi^2}{I}=x_2,\; \av{|\phi|^p}{I}=x_3}.
\end{equation}

\begin{St}\label{BasicPropertiesLow}
\begin{enumerate}
\item The function~$\Bell^{\mathrm{min}}_{p,r}$ satisfies boundary condition~\eqref{eqBCsceleton} on the skeleton.
\item The function~$\Bell^{\mathrm{min}}_{p,r}$ is locally convex on~$\Omega_{\eps}^3$.
\item The function~$\Bell^{\mathrm{min}}_{p,r}$ is the pointwise maximal among all locally convex on~$\Omega_{\eps}^3$ functions~$G$ that satisfy the boundary condition~\eqref{eqBCsceleton}.
\end{enumerate}
\end{St}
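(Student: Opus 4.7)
The plan is to mirror the proof of Proposition~\ref{BasicProperties}, exchanging sup/inf and concavity/convexity throughout. For statement~(1), I would observe that at a skeleton point $(t,t^2,|t|^p)$ the identity $\av{\vf}{I}^2=\av{\vf^2}{I}$ forces $\av{(\vf-t)^2}{I}=0$, hence $\vf\equiv t$ a.e.; the only admissible function is the constant $t$, and so the infimum equals $|t|^r$.

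For statement~(2), I would fix a segment $\ell=[x^{(1)},x^{(2)}]\subset\Omega_\eps^3$, a weight $\theta\in(0,1)$, and $\delta>0$, then pick near-minimizing test functions $\vf_i$ at $x^{(i)}$ satisfying $\av{|\vf_i|^r}{I}\leq \Bell^{\mathrm{min}}_{p,r}(x^{(i)})+\delta$. The projection of $\ell$ to the $(x_1,x_2)$-plane lies in $\Omega_\eps^2$, so Corollary~3.13 of~\cite{SZ} produces a function $\vf\in\BMO(I)$ with norm at most $\eps$ whose super-level measure is the $\theta$-convex combination of those of $\vf_1$ and $\vf_2$. The layer-cake formula then turns each moment $\av{|\vf|^s}{I}$ (and each $\av{\vf^k}{I}$) into the same convex combination of the corresponding moments of $\vf_1$ and $\vf_2$, so $\vf$ is a test function at $\theta x^{(1)}+(1-\theta)x^{(2)}$ realizing $r$-average at most $\theta\Bell^{\mathrm{min}}_{p,r}(x^{(1)})+(1-\theta)\Bell^{\mathrm{min}}_{p,r}(x^{(2)})+\delta$; sending $\delta\to 0$ yields convexity along $\ell$.

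For statement~(3), I would run the standard Bellman induction. Given any locally convex $G$ on $\Omega_\eps^3$ with the correct skeleton values, a point $x\in\Omega_\eps^3$, and a test function $\vf$ on $I$, define, for each dyadic subinterval $J\subset I$, the point $x_J=(\av{\vf}{J},\av{\vf^2}{J},\av{|\vf|^p}{J})\in\Omega_\eps^3$. On a parent interval the segment joining the children's points has midpoint $x_J$ and remains inside $\Omega_\eps^3$ since the $\BMO$ seminorm is monotone under restriction. Local convexity of $G$ along such segments, iterated through $n$ generations, yields $G(x)\leq \sum_{|J|=2^{-n}|I|}\frac{|J|}{|I|}G(x_J)$. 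The Lebesgue differentiation theorem together with the skeleton condition gives $x_J\to(\vf(t),\vf(t)^2,|\vf(t)|^p)$ and hence $G(x_J)\to|\vf(t)|^r$ for a.e.~$t$; a dominated-convergence step would produce $G(x)\leq\av{|\vf|^r}{I}$, and taking infimum over $\vf$ finishes the argument.

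The hard part will be the limit passage in statement~(3): one must secure an $L^1$-majorant for the dyadic values $G(x_J)$ in order to pass under the integral, which in turn requires a~priori control of $G$ near the skeleton. This is precisely the Bellman-induction bookkeeping developed in~\cite{SZ}, so the plan is to invoke that reference rather than redo the measure-theoretic details here.
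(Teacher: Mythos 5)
Your proposal is correct and is essentially the paper's own treatment: the paper states this proposition without a separate argument, relying on the same three ingredients you use for Proposition~\ref{BasicProperties} with sup/inf and concavity/convexity interchanged (the constant-function argument on the skeleton, splicing of near-extremal test functions via Corollary~3.13 of~\cite{SZ}, and the standard Bellman induction cited from~\cite{SZ}). One small caveat: in part~(3) the fact that the segment joining the children's Bellman points stays inside~$\Omega_\eps^3$ is not just "monotonicity of the $\BMO$ seminorm under restriction" --- it is a genuine splitting lemma (in the third coordinate it also uses the convexity/concavity of~$\Bell^{\mp}_{p}$ bounding the domain), but this is part of the machinery of~\cite{SZ} that both you and the paper invoke.
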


If $(r-2)(p-r)>0$, then the function $G$ constructed in Section~\ref{SectG} is locally convex, and it can be proved by literally the same arguments that $\Bell^{\mathrm{min}}_{p,r} = G$ in this case.
\begin{Th}
If $(r-2)(p-r)>0,$ then $\Bell^{\mathrm{min}}_{p,r} = G,$ where the function $G$ is given by formulas~\eqref{GDef1} and~\eqref{GDef2}. 
\end{Th}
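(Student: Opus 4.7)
The plan is to run the argument of Subsection~\ref{Sub34} with the roles of concavity and convexity swapped throughout. The sign computations already carried out in Section~\ref{SectG} produced $\sign(G_{x_3x_3})=\sign((r-2)(p-r))$ on both $\Xi_+$ and $\Xi_0$, while $G$ is affine on each fibre $L_u\cap \Omega^3_\eps$ and $\tilde L_u\cap \Omega^3_\eps$. Under the hypothesis $(r-2)(p-r)>0$, these two facts show that $G$ is locally convex on $\Xi_+\cup \Xi_0$; convexity on $\Xi_-$ is inherited through the even reflection~\eqref{Symmetry}. The $C^1$-gluing performed in the ``Concatenations'' subsection depends only on the identities~\eqref{mpdiffnew} and~\eqref{m-diff} and is insensitive to the sign of $(r-2)(p-r)$, so $G$ remains $C^1$-smooth across the common boundary $L_\eps$ and hence locally convex on all of $\Omega^3_\eps$. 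Combined with the boundary condition~\eqref{eqBCsceleton} on the skeleton, item~3 of Proposition~\ref{BasicPropertiesLow} then yields $G\le \Bell^{\min}_{p,r}$ on $\Omega^3_\eps$.

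For the reverse inequality I would reuse verbatim the vertex optimizers from~\eqref{eq101201}. The constant function $\phi\equiv u$ is a valid test function at $U$ with $\av{|\phi|^r}{I}=u^r=G(U)$, whence $\Bell^{\min}_{p,r}(U)\le G(U)$. The functions $\phi_{U_+}$ and $\phi_{U_-}$ borrowed from~\cite{SV} are valid test functions at $U_\pm$ with $r$-th averages equal to $G(U_\pm)$, whence $\Bell^{\min}_{p,r}(U_\pm)\le G(U_\pm)$. Combining these pointwise bounds with the preceding inequality forces equality of $G$ and $\Bell^{\min}_{p,r}$ at every vertex of every curvilinear triangle $T_u$, $u\ge\eps$, and every curvilinear quadrilateral $\tilde T_u$, $u\in[0,\eps]$; the vertices lying in $\Xi_-$ are reached by~\eqref{Symmetry}.

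To propagate the identity to the interiors I would invoke that $G$ is affine on each $T_u$ and each $\tilde T_u$ while $\Bell^{\min}_{p,r}$ is convex along every segment contained in $\Omega^3_\eps$. Writing an arbitrary interior point $x$ as a convex combination $x=\sum_i\lambda_i v_i$ of the vertices $v_i$ and applying segment convexity in two successive steps (first along a segment between two vertices, then along the segment from the third vertex to the midpoint produced) yields $\Bell^{\min}_{p,r}(x)\le \sum_i\lambda_i G(v_i)=G(x)$, hence equality on the whole triangle. Since the foliation of $\Omega^3_\eps$ by the triangles $T_u$, $\tilde T_u$ and their $\Xi_-$ counterparts was already established in Section~\ref{SectG}, this completes the identification $\Bell^{\min}_{p,r}=G$ everywhere. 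I do not anticipate any substantive new obstacle, because every analytic ingredient (local convexity of $G$, explicit vertex optimizers inherited from the upper problem, and the covering of $\Omega^3_\eps$) is already available; the only task is bookkeeping to reverse the directions of the inequalities used in Subsection~\ref{Sub34}.
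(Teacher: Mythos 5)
Your plan is the one the paper itself invokes (``literally the same arguments''): the sign computations of Section~\ref{SectG} give local convexity of $G$ when $(r-2)(p-r)>0$, the $C^1$-concatenation is indeed insensitive to that sign, item~3 of Proposition~\ref{BasicPropertiesLow} yields $G\le \Bell^{\mathrm{min}}_{p,r}$, and the optimizers~\eqref{eq101201} give $\Bell^{\mathrm{min}}_{p,r}\le G$ at the vertices $U$, $U_\pm$ (and, by~\eqref{Symmetry}, at their reflections). All of that is correct and matches the paper.

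The one place where you go beyond the paper's wording — the explicit two-step barycentric propagation into the interior of $T_u$ — does not work as stated. Local convexity of $\Bell^{\mathrm{min}}_{p,r}$ only gives Jensen's inequality along segments that lie \emph{inside} $\Omega^3_\eps$ (outside the domain the infimum is over the empty set, so the function is $+\infty$ there), and the segments your two steps require are not inside. The chord $[U_-,U_+]$ projects onto the chord of the upper parabola joining $(u\pm\eps,(u\pm\eps)^2+\eps^2)$, whose midpoint is $(u,u^2+2\eps^2)$, violating $x_2\le x_1^2+\eps^2$; a short computation of $x_2-x_1^2$ shows that every segment from $U_-$ to an \emph{interior} point of the edge $[U,U_+]$ likewise rises above the upper parabola, and the edge $[\bar U_+,U_+]$ of $\tilde T_u$ has the same defect. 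So neither ordering of your two steps stays in the domain, and convexity cannot be invoked along those segments. The repair is to use only chords of $T_u$ (resp.\ $\tilde T_u$): first obtain $\Bell^{\mathrm{min}}_{p,r}=G$ along the whole straight edges $[U,U_\pm]$ — at every point of these edges the optimizers from~\cite{SV} (concatenations of a constant with $\phi_{U_\pm}$) are test functions whose $p$-th and $r$-th moments equal $\Am{p},\Am{r}$ (resp.\ $\Ak{p},\Ak{r}$), i.e.\ the plane values, exactly as at the vertices; this is what condition~3) of Section~\ref{s2} demands — and then reach the curvilinear edge by chords whose projections are tangent to the upper parabola (such a chord through any point of the arc has its endpoints on $S_\pm(u)$ and stays in $\Omega^2_\eps$), and the remaining interior points by chords emanating from $U$, which also stay in the domain. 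With this (or with the scheme 3)--4) of Section~\ref{s2}) your argument closes; note that the paper is equally terse here, but it never commits to the inadmissible vertex-to-vertex chords.
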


\section{Extracting the constant}\label{SETC}
We are going to compute the best possible constant~$c_{p,r}$ in the inequality
\begin{equation}\label{cpr}
\|\phi\|_{L^r(I)} \leq c_{p,r}\|\phi\|_{L^p(I)}^{\frac{p}{r}} \|\phi\|_{\BMO(I)}^{1-\frac{p}{r}},\qquad \av{\varphi}{I} = 0.
\end{equation}
Without loss of generality, we may assume~$\|\phi\|_{\BMO}=1$, so we set~$\eps=1$ throughout this section. We raise the inequality to the power~$r$:
\begin{equation*}
\int |\phi|^r \leq c_{p,r}^r \int |\phi|^p,\qquad \|\phi\|_{\BMO} = 1.
\end{equation*}

Let us search for the best possible constant~$d_{p,r}$ in the inequality
\begin{equation*}
\Bell_{p,r}(0,x_2,x_3) \leq d_{p,r}^r\cdot x_3, \quad (0,x_2,x_3) \in \Omega_1^3.
\end{equation*}
Note that~$c_{p,r}\leq d_{p,r}$ by their definitions. On the other hand, it follows from homogeneity that the constant~$d_{p,r}$ is attained at some~$\phi$ with~$\|\phi\|_{\BMO}=1$ (i.e., the optimizer for~$\Bell_{p,r}$ at~$(0,x_2,x_3)$ indeed has~$\BMO$-norm equal to one). Therefore,~$c_{p,r} = d_{p,r}$. 

Since~$x_1=0$, we will be investigating the values of~$\Bell_{p,r}$ on~$\Xi_0$. The~$x_3$ coordinate is then given by~\eqref{planeeq2}, whereas the value of~$\Bell_{p,r}$ is provided by~\eqref{GDef2}. We need to maximize
\begin{equation*}
\frac{\Bell_{p,r}(0,x_2,x_3)}{x_3} =  \frac{2(u+1)u^r+(x_2-u^2)m_r(u)}{2(u+1)u^p+(x_2-u^2)m_p(u)}, \qquad x_2 \in [u^2,1], \quad u \in[0,1].
\end{equation*}
For any~$u \in(0,1]$ fixed, the latter expression is a fraction of two linear functions of~$x_2$ whose denominator does not vanish on~$[u^2,1]$. Thus, the function in question attains its maximum at one of the endpoints. At the endpoint~$x_2=u^2$, the value is $u^{r-p}$, which does not exceed one since $r>p$. At~$x_2=1$, we obtain
\begin{equation*}
g(u) = \frac{2u^r+(1-u)m_r(u)}{2u^p+(1-u)m_p(u)}.
\end{equation*}
We claim that~$g$ is decreasing on~$[0,1]$. The sign of~$g'(u)$ coincides with the sign of
\begin{gather}
\notag \big(2ru^{r-1} - m_r +(1-u)m_r'\big)\big(2u^p+(1-u)m_p \big) -\big(2pu^{p-1} - m_p +(1-u)m_p'\big)\big(2u^r+(1-u)m_r \big)\\
\notag\stackrel{\hbox{\tiny \eqref{m-diff}}}{=}\big(r(1+u)u^{r-1} - u m_r \big)\big(2u^p+(1-u)m_p \big)-
\big(p(1+u)u^{p-1} - u m_p\big)\big(2u^r+(1-u)m_r \big)\\
\notag =u\Big(m_ru^{p-2}(p(u^2-1)-2u^2) - m_pu^{r-2}(r(u^2-1)-2u^2)+2(r-p)(1+u)u^{p+r-2}\Big)\\
\label{eq151001} 
=u^{p+r-1}\left(p\big(r(u^2-1)-2u^2\big) \Big(\frac{m_r}{ru^r}-\frac{m_p}{pu^p}\Big) -
2(r-p)u^2\Big(\frac{m_r}{ru^r}-\frac{1+u}{u^2}\Big)\right).
\end{gather}
Note that
\eqlb{eq151002}{
\frac{m_r(u)}{r u^{r}} \stackrel{\hbox{\tiny \eqref{mp+}}}{=} \int_1^\infty e^{u(1-t)}t^{r-1}\ dt.
}
Therefore, the conditions~$r>p$ and $r>2$ imply
\eqlb{eq151003}{
\frac{m_r(u)}{r u^{r}} > \frac{m_p(u)}{p u^{p}}, \qquad \frac{m_r(u)}{r u^{r}}  > \frac{m_2(u)}{2 u^{2}} = \int_1^\infty e^{u(1-t)}t \ dt = \frac{1+u}{u^2}.
}


So, the expression in~\eqref{eq151001} is negative, proving our claim that~$g$ is decreasing. This implies that~$g$ attains its maximum at~$0$, which is
\begin{equation*}
g(0) = \frac{r\Gamma(r)}{p\Gamma(p)} = \frac{\Gamma(r+1)}{\Gamma(p+1)}.
\end{equation*}
Returning to 3D coordinates, we see that the extremal value is attained at the point~$(0,1,{\frac{\Gamma(p+1)}{2}}) \in \Omega_1^3$:
\begin{equation*}
\Bell_{p,r}\Big(0,1,{\frac{\Gamma(p+1)}{2}}\Big) ={\frac{\Gamma(r+1)}{2}}. 
\end{equation*}

{Let us provide an optimiser $\phi_0$ at the point $(0,1,{\frac{\Gamma(p+1)}{2}})$ for the function $\Bell_{p,r}$:
$$
\phi_0 (t) = 
\begin{cases}
-\ln(2-t), & t \in[1,2),\\
0, & t \in [-1,1],\\
\ln(t+2), & t \in (-2,-1]
\end{cases}
$$
on the interval $I = (-2,2)$. It satisfies the following relations:
$$
\av{\phi_0}{I}=0, \quad \av{\phi_0^2}{I}=1, \quad \av{|\phi_0|^p}{I}= \frac{\Gamma(p+1)}{2},\quad \av{|\phi_0|^r}{I}= \frac{\Gamma(r+1)}{2},\quad \|\phi_0\|_{\BMO(I)}=1. 
$$

So,~\eqref{cpr} turns into equality with~$c_{p,r} = \Big(\frac{\Gamma(r+1)}{\Gamma(p+1)}\Big)^{1/r}$ for this function $\phi_0$.
}

\section{Transference}\label{s6}
In this section we will prove Theorems~\ref{Ctheorem} and~\ref{Rtheorem}. Inequality~\eqref{MultCircle} is a direct consequence of~\eqref{eq141202} since the circle~$\BMO$-norm dominates the interval~$\BMO$-norm of the same function.
To prove~\eqref{eq141201}, we will use~\eqref{eq141202} and some standard limiting arguments in Subsection~\ref{s61}. The main difficulty here is to prove the sharpness of~\eqref{MultCircle} and~\eqref{eq141201}, we will do this in Subsection~\ref{s62}.

\subsection{Inequality}\label{s61}

We start with proving~\eqref{eq141201}. Let $I_n =[-n,n]$, $n \in \mathbb{N}$, and let~$\vf \in \BMO(\mathbb{R})$. We apply~\eqref{eq141202} to $\vf - \av{\vf}{I_n}$ on $I_n$ and use the obvious inequality $\|\vf - \av{\vf}{I_n}\|_{\BMO(I_n)} = \|\vf\|_{\BMO(I_n)} \leq \|\vf\|_{\BMO(\mathbb{R})}$:
\eqlb{eq141203}{
\|\vf - \av{\vf}{I_n}\|_{L^r(I_n)} \leq \Big(\frac{\Gamma(r+1)}{\Gamma(p+1)}\Big)^\frac{1}{r}\|\vf - \av{\vf}{I_n}\|_{L^p(I_n)}^{\frac{p}{r}}\|\vf\|_{\BMO(\mathbb{R})}^{1-\frac{p}{r}}.
} 
\begin{Le}\label{Lem161201}
For $\vf \in L^p(\mathbb{R})$ one has 
\eqlb{eq141204}{
\|\av{\vf}{I_n}\chi_{I_n}\|_{L^p(\mathbb{R})} = |I_n|^{1/p}\;|\av{\vf}{I_n}| \to 0, \qquad n \to \infty.
}
\end{Le}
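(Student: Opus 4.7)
My plan is a standard density argument. The first equality follows by direct computation, and for the limit I would first dispatch compactly supported test functions and then extend by density.

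The equality is immediate: since $\av{\vf}{I_n}\chi_{I_n}$ takes the constant value $\av{\vf}{I_n}$ on $I_n$ and vanishes elsewhere, raising its $L^p(\mathbb{R})$-norm to the $p$-th power yields $|I_n|\cdot|\av{\vf}{I_n}|^p$.

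For the limit I first handle $\psi \in C_c(\mathbb{R})$. If $\supp\psi \subset I_N$, then for every $n \ge N$ the quantity $\int_{I_n}\psi = \int_{\mathbb{R}}\psi$ is independent of $n$, so
\begin{equation*}
|I_n|^{1/p}|\av{\psi}{I_n}| = |I_n|^{1/p-1}\Big|\int_{\mathbb{R}}\psi\Big| = (2n)^{1/p-1}\Big|\int_{\mathbb{R}}\psi\Big| \to 0,
\end{equation*}
which uses $p > 1$. For general $\vf \in L^p(\mathbb{R})$, H\"older's inequality produces the uniform-in-$n$ bound
\begin{equation*}
|I_n|^{1/p}|\av{\vf - \psi}{I_n}| \le |I_n|^{1/p-1}\int_{I_n}|\vf-\psi| \le \|\vf-\psi\|_{L^p(\mathbb{R})}.
\end{equation*}
Given $\delta > 0$, density of $C_c(\mathbb{R})$ in $L^p(\mathbb{R})$ furnishes $\psi$ with $\|\vf-\psi\|_{L^p(\mathbb{R})} < \delta$, and the triangle inequality together with the compactly supported case gives $\limsup_n |I_n|^{1/p}|\av{\vf}{I_n}| \le \delta$; since $\delta$ is arbitrary, the limit equals $0$.

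I do not foresee any real obstacle here; this is routine soft analysis. The only point worth recording is that H\"older by itself yields no decay in $n$ (only the sharp uniform bound $|I_n|^{1/p}|\av{\vf}{I_n}| \le \|\vf\|_{L^p(\mathbb{R})}$), so the compactly supported case is essential for producing the actual limit.
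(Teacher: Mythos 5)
Your proposal is correct and is essentially the paper's own argument in a different packaging: the paper truncates $\vf$ at a fixed $I_N$ chosen so that the tail has $L^p$-norm below $\delta$, bounds the averages of the two pieces by the power-mean/H\"older inequality, and lets the compactly supported part decay like $|I_n|^{1/p-1}$ --- exactly your ``compactly supported main term plus uniformly small error'' scheme, with density of $C_c(\mathbb{R})$ standing in for the explicit truncation (continuity is never actually needed, $\vf\chi_{I_N}$ would do). Your explicit remark that the decay uses $p>1$ is a point the paper leaves implicit; its final estimate likewise degenerates when $p=1$.
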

\begin{proof} 
Let $\delta>0$ be an arbitrary real. Pick $N$ such that $\int_{\mathbb{R} \setminus I_N}|\vf|^p < \delta^p$. Then, for any~$n>N$, we have
\begin{gather*}
|\av{\vf}{I_n}| \leq \frac{|I_N|}{|I_n|}\av{|\vf|}{I_N} + \frac{|I_n|-|I_N|}{|I_n|}\av{|\vf|}{I_n\setminus I_N} \leq 
\frac{|I_N|}{|I_n|}\av{|\vf|^p}{I_N}^{1/p} + \frac{|I_n|-|I_N|}{|I_n|}\av{|\vf|^p}{I_n\setminus I_N}^{1/p}\\
\leq \frac{|I_N|^{1-1/p}}{|I_n|} \|\vf\|_{L^p(\mathbb{R})} + \frac{(|I_n|-|I_N|)^{1-1/p}}{|I_n|}\;\delta.
\end{gather*}
This proves~\eqref{eq141204}.
\end{proof}
\begin{Cor}
For $\vf \in L^p(\mathbb{R})$ one has 
\eqlb{eq161201}{
\|\vf - \av{\vf}{I_n}\|_{L^p(I_n)} \to \|\vf\|_{L^p(\mathbb{R})}, \qquad n \to \infty.
}
\end{Cor}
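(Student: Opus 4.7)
The plan is to sandwich $\|\vf - \av{\vf}{I_n}\|_{L^p(I_n)}$ between two quantities that both tend to $\|\vf\|_{L^p(\mathbb{R})}$, using the triangle inequality in $L^p(I_n)$ together with Lemma~\ref{Lem161201}.

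First, I would write, using the $L^p$-triangle inequality applied to the decomposition $\vf = (\vf - \av{\vf}{I_n}) + \av{\vf}{I_n}$ on $I_n$,
\begin{equation*}
\Big|\|\vf - \av{\vf}{I_n}\|_{L^p(I_n)} - \|\vf\|_{L^p(I_n)}\Big| \leq \|\av{\vf}{I_n}\chi_{I_n}\|_{L^p(\mathbb{R})}.
\end{equation*}
By Lemma~\ref{Lem161201} the right-hand side tends to zero as $n \to \infty$.

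Next, I would observe that $\|\vf\|_{L^p(I_n)} \to \|\vf\|_{L^p(\mathbb{R})}$: indeed, $|\vf|^p\chi_{I_n}$ increases monotonically to $|\vf|^p$, so monotone convergence gives $\int_{I_n}|\vf|^p \to \int_{\mathbb{R}}|\vf|^p$, and raising to the power $1/p$ gives convergence of the norms. Combining this with the previous display yields~\eqref{eq161201}.

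There is no real obstacle here; the only small point to get right is to apply the triangle inequality in the two-sided ``reverse'' form so that one controls $\big|\|\vf - \av{\vf}{I_n}\|_{L^p(I_n)} - \|\vf\|_{L^p(I_n)}\big|$ by the quantity that was shown to vanish in Lemma~\ref{Lem161201}, rather than only one direction of the inequality.
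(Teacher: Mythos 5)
Your proof is correct and is essentially the paper's own argument: the paper's chain of equalities $\|\vf\chi_{I_n}-\av{\vf}{I_n}\chi_{I_n}\|_{L^p(\mathbb{R})}\to\|\vf\chi_{I_n}\|_{L^p(\mathbb{R})}\to\|\vf\|_{L^p(\mathbb{R})}$ rests on exactly the two ingredients you make explicit, namely the reverse triangle inequality combined with Lemma~\ref{Lem161201} and monotone convergence of $\int_{I_n}|\vf|^p$ (valid since $p\geq 1$ is the standing assumption). You have simply spelled out the details that the paper leaves implicit.
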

\begin{proof}
From~\eqref{eq141204}, we have
$$
\lim_{n \to \infty} \|\vf - \av{\vf}{I_n}\|_{L^p(I_n)} = 
\lim_{n \to \infty} \|\vf\chi_{I_n} - \av{\vf}{I_n}\chi_{I_n}\|_{L^p(\mathbb{R})}=
\lim_{n \to \infty} \|\vf\chi_{I_n}\|_{L^p(\mathbb{R})} = 
\|\vf\|_{L^p(\mathbb{R})}.
$$
\end{proof}
We finish the proof of~\eqref{eq141201} by using Fatou's Lemma:
\begin{align*}
\|\vf\|_{L^r} &\leq \liminf_{n \to \infty} \|\vf - \av{\vf}{I_n}\|_{L^r(I_n)}\\
&\stackrel{\hbox{\tiny \eqref{eq141203}}}{\leq}
\lim_{n\to \infty} \Big(\frac{\Gamma(r+1)}{\Gamma(p+1)}\Big)^\frac{1}{r}\|\vf - \av{\vf}{I_n}\|_{L^p(I_n)}^{\frac{p}{r}}\|\vf\|_{\BMO(\mathbb{R})}^{1-\frac{p}{r}} 
= \Big(\frac{\Gamma(r+1)}{\Gamma(p+1)}\Big)^\frac{1}{r}\|\vf\|_{L^p(\mathbb{R})}^{\frac{p}{r}}\|\vf\|_{\BMO(\mathbb{R})}^{1-\frac{p}{r}}.
\end{align*}

\subsection{Sharpness}\label{s62}
We will first prove the sharpness of~\eqref{MultCircle} and complete the proof of Theorem~\ref{Ctheorem}. For that, we will construct a special function~$\psi_0$ on the circle. After that, we will modify this function to prove the sharpness of~\eqref{eq141201}, thus, completing the proof of Theorem~\ref{Rtheorem}.

Unfortunately, we have no simple formula for such a function $\psi_0$. We will rely upon the material of~\cite{SZ} and~\cite{SZTransference}. 
The following lemma is pivotal in the construction. In particular, this lemma proves the sharpness of~\eqref{MultCircle}. 
\begin{Le}\label{Le141201}
For any $\delta>0$ there exists a function $\psi_0$ on $\mathbb{R}$ with the following properties\textup{:}
\begin{enumerate}[1\textup{)}]
	\item \label{prop1}$\psi_0$ is $1$-periodic\textup{,} i.\,e.\textup{,} $\psi_0(t+1)=\psi_0(t)$ for $t \in \mathbb{R};$
	\item \label{prop3} 
	$$
        \int_0^1 |\psi_0|^p =\av{|\phi_0|^p}{I} + O(\delta), \qquad  \int_0^1 |\psi_0|^r =  \av{|\phi_0|^r}{I} + O(\delta);
        $$
	\item\label{prop4} $\|\psi_0\|_{\BMO(\mathbb{R})} \leq 1+\delta;$
	\item \label{prop6} $\int_0^1\psi_0 = 0$ and~$\int_0^1\psi^2_0 = 1$.
\end{enumerate}
\end{Le}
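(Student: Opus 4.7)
The plan is to periodize the interval optimizer $\phi_0$ of Section~\ref{SETC}, using alternating orientations to reconcile its asymmetric logarithmic singularities ($\phi_0\to+\infty$ at $t=2^-$ and $\phi_0\to-\infty$ at $t=-2^+$). Explicitly, define
\[
\psi_0(t)=\phi_0(8t-2)\text{ on }[0,\tfrac12),\qquad \psi_0(t)=\phi_0(6-8t)\text{ on }[\tfrac12,1),
\]
extended $1$-periodically. The alternation makes the function continuous at each junction $t_0\in\tfrac12\mathbb Z$ (both one-sided limits blow up with the same sign), and a direct change of variables sending $[0,\tfrac12]$ to $I=(-2,2)$ (and the analogous reflection on $[\tfrac12,1]$) proves properties 1), 2), and 4) exactly, using $\int_I\phi_0=0$, $\int_I\phi_0^2=4$, $\int_I|\phi_0|^p=2\Gamma(p+1)$, and the analogous identity for the $r$-th power.

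The delicate point is property 3). Near each junction $\psi_0(t)=\pm\ln(8|t-t_0|)$ locally, and a short computation shows that for $J=[t_0-a,t_0+b]$ with $a\ne b$,
\[
\av{(\psi_0-\av{\psi_0}{J})^2}{J}=1+\tfrac{ab}{(a+b)^2}\ln^2(b/a)>1.
\]
Maximizing in $a,b$ gives $\|\psi_0\|_{\BMO(\mathbb R)}=\gamma$ for a fixed $\gamma>1$, so the naive construction alone fails 3).

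To bring the $\BMO$-norm down to $1+\delta$, I would smooth the logarithmic tails of $\phi_0$ at a scale $h=h(\delta)\to 0$ before periodizing: replace $\phi_0$ on $[2-h,2)\cup(-2,-2+h]$ by a bounded surrogate whose distribution agrees with that of $\phi_0$ outside a sufficiently high level $\{|v|\ge M_h\}$. The existence of such a $\BMO$-function on~$I$ with norm at most~$1$ is exactly the distribution-function gluing statement of \cite{SZ} (Corollary 3.13) already invoked in Section~\ref{s2}. Periodizing the smoothed function via the alternating scheme above removes the local $\log$-excess at every junction, and the modifications change $\int|\psi_0|^p$ and $\int|\psi_0|^r$ by $O(h)$. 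The exact normalizations $\int\psi_0=0$ and $\int\psi_0^2=1$ are then restored by a small affine correction that perturbs the higher integrals by a further $O(h)$.

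The main obstacle is the uniform $\BMO$ bound on $\mathbb R$: the smoothing must quench excess variance not only on short asymmetric intervals around a single junction, but on \emph{every} interval, including long ones spanning several junctions or mixing smoothed and bulk regions. This global estimate is the content of the interval/circle/line transference scheme of~\cite{SZTransference}, where extremizers are compared through their distribution functions, and its application yields the bound $\|\psi_0\|_{\BMO(\mathbb R)}\le 1+\delta$.
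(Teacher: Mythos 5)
Your diagnosis that the naive periodization fails is correct and nicely quantified: the variance identity
\[
\av{(\psi_0-\av{\psi_0}{J})^2}{J}=1+\frac{ab}{(a+b)^2}\ln^2(a/b),\qquad J=[t_0-a,t_0+b],
\]
is exact for the pure logarithmic profile, and its supremum over $a/b$ gives a fixed $\gamma>1$. However, the proposed repair does not work, and this is where the gap lies. The excess $\frac{ab}{(a+b)^2}\ln^2(a/b)$ is scale-invariant: it depends only on the \emph{ratio} $a/b$, not on the absolute sizes of $a$ and $b$. Smoothing $\phi_0$ at scale $h$ only modifies the function on an $O(h)$-neighbourhood of each junction; for any interval $[t_0-a,t_0+b]$ with $h\ll a,b$ but $a/b$ at the critical ratio, the variance is still $\approx\gamma$ up to an $O(h/\min(a,b))$ error. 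No choice of $h(\delta)\to 0$ quenches this excess, because you can always take $a,b$ one order of magnitude above $h$. So the smoothed periodization has $\BMO$-norm still close to $\gamma$, not $1+\delta$. The invocation of [SZ, Corollary~3.13] also does not produce what you need: that corollary merges two $\BMO$ functions with a prescribed convex combination of distribution functions, but it gives no information about the $\BMO$ norm of an explicit periodization of the result across junctions.

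The closing appeal to the transference scheme of~\cite{SZTransference} is the crux, and in effect concedes the point. Theorem~2.3 of~\cite{SZTransference} does not take an arbitrary (even smoothed) concatenation and certify its $\BMO$-norm; it takes as input a \emph{simple $\Omega^2_\eps$-martingale} and constructs from it, by a specific recursive procedure, a periodic function whose $\BMO$-norm is controlled by the martingale parameter. This is what the paper's proof does: it first builds an $\Omega^2_{1+\delta/3}$-martingale $M$ with terminal distribution equimeasurable with $\phi_0$ (Theorem~3.7 of~\cite{SZ}), replaces it by a finitely stopped martingale $N$ with nearly the same $p$-th and $r$-th moments, and only then invokes Theorem~2.3 of~\cite{SZTransference} to obtain the periodic $\psi_0$ with $\|\psi_0\|_{\BMO(\mathbb{R})}\le 1+\delta$. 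The martingale structure is what absorbs the junction asymmetry that defeats the hands-on periodization; bypassing it, as your proposal attempts, is precisely the step that breaks down.
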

\begin{proof}
The proof follows the lines of~\cite{SZTransference}. 

Fix~$\delta > 0$.  First, we will need the notion of an~$\Omega^2_{\eps}$-martingale introduced in~\cite{SZ}. We say that a discrete time martingale~$(M,S) = (\{M_n\}_n,\{S_n\}_n)$, where~$S = \{S_n\}_n$ is a discrete time filtration of finite algebras, and the~$M_n$ are~$\mathbb{R}^2$-valued random variables, is an~$\Omega^2_{\eps}$-martingale provided
\begin{enumerate}[1)]
\item $S_0$ is the trivial algebra;
\item there exists a summable random variable~$M_{\infty}$ whose values lie on the parabola~$x_2 = x_1^2$ almost surely and such that~$M_n \to M_{\infty}$ as~$n\to\infty$ in mean and almost surely;
\item for any atom~$w\in S_n$, the convex hull of the set~$\{M_{n+1}(x)\colon x\in w\}$ lies inside~$\Omega_{\eps}^2$.
\end{enumerate}
We refer the reader to~\cite{SZ} for basic properties of such type martingales. By Theorem~$3.7$ in~\cite{SZ}, there exists an~$\Omega_{1+\frac{\delta}{3}}^2$ martingale~$M$ such that
\begin{equation*}
\mathrm{P}(M_{\infty}^1 > \lambda) = \frac{1}{|I|}\cdot \Big|\big\{t \in I \colon \phi_0(t)>\lambda \big\}\Big|, \qquad \lambda \in \mathbb{R};
\end{equation*}
by~$M_{\infty}^1$ we denote the first coordinate of the random vector~$M_{\infty}$ (recall that~$\varphi_0$ is the optimizer constructed at the end of the previous section). In other words, the first coordinate of the terminate distribution of~$M$ is equimeasurable with~$\varphi_0$.

Next, by a routine stopping time argument, we may replace~$M$ with a simple~$\Omega^2_{1+\frac{\delta}{3}}$-martingale~$N$ (i.e. a martingale that stops after a finite number of steps) such that~$N_0 = M_0$ and
\begin{equation*}
\E \big|N_{\infty}^1\big|^p = \E \big|M_{\infty}^1\big|^p + O(\delta);\qquad \E \big|N_{\infty}^1\big|^r = \E \big|M_{\infty}^1\big|^r + O(\delta).
\end{equation*}

We apply Theorem~$2.3$ from~\cite{SZTransference} to~$N$ and obtain a~$1$-periodic function~$\psi_0$ on the line such that
\begin{equation*}
\|\psi_0\|_{\BMO(\mathbb{R})} \leq 1+\frac{\delta}{2}
\end{equation*} 
and~$\psi_0$ is equimeasurable with~$N^1_{\infty}$ in the sense that
\begin{equation*}
\Big|\Big\{t \in \Big[-\frac12,\frac12\Big] \colon \psi_0(t)>\lambda \Big\}\Big| = \mathrm{P}(N_{\infty}^1 > \lambda)
\end{equation*}
for any~$\lambda \in\mathbb{R}$. In particular, the function~$\psi_0$ satisfies requirements~\ref{prop3} and~\ref{prop4} of the lemma. Since
\begin{equation*}
N_0 = M_0 = (0,1),
\end{equation*}
we have~$\av{\psi_0}{[-\frac12,\frac12]} = 0$ and~$\av{\psi_0^2}{[-\frac12,\frac12]} = 1$. 
\end{proof}
So, we have proved the sharpness of~\eqref{MultCircle} and completed the proof of Theorem~\ref{Ctheorem}.

Now we are ready to prove the sharpness of~\eqref{eq141201}, which will easily follow from the lemma below.
\begin{Le}\label{Le200101}
For any $\delta>0$ there exists a function $\psi$ on $\mathbb{R}$ with the following properties\textup{:}
\begin{enumerate}[1\textup{)}]
	\item \label{prop2}$\psi(t) = 0$ for $t \notin (0,1);$
	\item \label{prop32} 	
$$
\int_0^1 |\psi|^p=\int_0^1|\psi_0|^p =\av{|\phi_0|^p}{I} + O(\delta), \qquad  \int_0^1 |\psi|^r =\int_0^1|\psi_0|^r=  \av{|\phi_0|^r}{I} + O(\delta);
        $$
	\item\label{prop42} $\|\psi\|_{\BMO(\mathbb{R})} \leq 1+\delta.$
\end{enumerate}
\end{Le}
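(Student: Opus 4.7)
The plan is to construct $\psi$ from the periodic function $\psi_0$ of Lemma~\ref{Le141201} by a rescale-and-truncate procedure. Fix $\delta>0$ and let $\delta'>0$ be a smaller parameter, to be chosen. Apply Lemma~\ref{Le141201} with parameter $\delta'$ to obtain a $1$-periodic $\psi_0$ with $\|\psi_0\|_{\BMO(\mathbb R)}\le 1+\delta'$ and the stated moment conditions. Since the underlying martingale is simple, $\psi_0$ is bounded; set $M:=\|\psi_0\|_{L^\infty(\mathbb R)}$. For a large integer $N$, I would put
\begin{equation*}
\psi(t):=\psi_0(Nt)\,\chi_{(0,1)}(t).
\end{equation*}
Property~1) is immediate, and property~2) follows from $1$-periodicity, since for any $q>0$
\begin{equation*}
\int_0^1|\psi|^q=\frac{1}{N}\int_0^N|\psi_0|^q=\int_0^1|\psi_0|^q,
\end{equation*}
matching the integrals provided by Lemma~\ref{Le141201}.

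The main task is property~3). I would analyze an arbitrary interval $J=(a,b)\subset\mathbb R$ case by case. The cases $J\cap(0,1)=\varnothing$, $J\subset(0,1)$ (where dilation invariance of $\BMO(\mathbb R)$ gives variance $\le(1+\delta')^2$), and $J\supset[0,1]$ (where $|J|\ge 1$ together with $\int_0^1\psi_0=0$ and $\int_0^1\psi_0^2=1$ yield variance $\le 1/|J|\le 1$) are routine.

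The delicate case is $J=(a,b)$ with $a\le 0<b\le 1$ (the symmetric case at $t=1$ is analogous). Setting $\lambda:=b/|J|\in(0,1]$, $m:=\av{\psi_0}{(0,Nb)}$ and $q:=\av{\psi_0^2}{(0,Nb)}$, a short computation gives
\begin{equation*}
\var_J(\psi)=\lambda q-\lambda^2m^2=\lambda(q-m^2)+\lambda(1-\lambda)m^2\le(1+\delta')^2+\lambda(1-\lambda)m^2,
\end{equation*}
using the $\BMO$ bound $q-m^2\le(1+\delta')^2$ for $\psi_0$. Since $F(x):=\int_0^x\psi_0$ is $1$-periodic and bounded, $|m|=|F(Nb)|/(Nb)=O(1/(Nb))$ whenever $Nb\ge 1$, and this contribution is controlled by choosing $N$ large.

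The main obstacle is the residual regime of very short boundary intervals with $Nb=O(1)$; there only the crude bound $|m|\le M$ is available, giving $\var_J(\psi)\le(1+\delta')^2+M^2/4$, which need not be below $(1+\delta)^2$. To resolve this, I would exploit the flexibility of the transference step from~\cite{SZTransference} to arrange, at an $O(\delta)$-cost in the moment conditions, that $\psi_0$ vanishes on a neighborhood of every integer point. This eliminates the boundary jump and forces $q=m=0$ on the problematic intervals. A final choice of $\delta'$, the vanishing-neighborhood width, and $N$ (in this order) then produces a $\psi$ satisfying all three requirements.
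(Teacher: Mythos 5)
Your reduction of the straddling-interval case to the term $\lambda(1-\lambda)m^2$ with $m=\av{\psi_0}{(0,Nb)}$ is correct, and you rightly see that the naive compress-and-truncate construction fails there; but the repair you propose does not close the gap. First, the claim that the transference step can be tuned ``at $O(\delta)$-cost'' so that $\psi_0$ vanishes in a neighborhood of every integer is not something Theorem 2.3 of \cite{SZTransference} gives you: it produces a periodic function equimeasurable with the terminal distribution of the martingale together with a $\BMO$ bound, with no control over where inside the period the zero set sits, so this step would itself require a proof. Second, and more seriously, even granting such a $\psi_0$ vanishing on $(k-w,k+w)$, the problematic regime is not only $Nb\le w$. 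For $Nb$ in an intermediate range $w<Nb\le T$ (where $T$ is the threshold beyond which $|m|\le C/(Nb)$ becomes small, using that $\int_0^x\psi_0$ is $1$-periodic and bounded), the quantity $m=\av{\psi_0}{(0,Nb)}$ is merely some bounded number, typically of the size of the values of $\psi_0$ just to the right of $w$, and nothing in the construction forces it to be $O(\sqrt{\delta})$. Taking $a=-b$ gives $\lambda\approx 1/2$, so $\lambda(1-\lambda)m^2$ is then a fixed positive constant independent of $N$, $w$ and $\delta'$, and the bound $(1+\delta)^2$ fails. What you actually need is that the mean of $\psi$ over \emph{every} boundary interval $[0,a]$ or $[a,1]$ is $O(\sqrt\delta)$, i.e.\ that the distribution of $\psi$ on each such interval is close to its global distribution; vanishing on a single fixed neighborhood of the endpoints does not give this.

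That endpoint control is exactly what the paper arranges, by a different mechanism: instead of squeezing many periods into $[0,1]$ and truncating, it applies the $\lambda$-homogenization $\Gamma_\lambda$ of \cite{SZTransference} to one period of $\psi_0$, which makes the function self-similar near the endpoints of $[0,1]$. Lemma 2.7 of \cite{SZTransference} gives $\|\Gamma_\lambda[\psi_0]\|_{\BMO([0,1])}\le 1+\delta$, and the self-similarity gives $\av{\psi}{\tilde J}\approx 0$ and $\av{\psi^2}{\tilde J}\approx 1$ for every $\tilde J=[0,a]$ or $[a,1]$; for a straddling interval $J$ the paper then simply drops the mean term and bounds the variance by $\av{\psi^2}{J}\le\av{\psi^2}{\tilde J}\le 1+\delta$, so no analogue of your $m^2$ term ever has to be estimated. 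To rescue your construction you would have to build this endpoint self-similarity (equivalently, smallness of all initial and terminal averages at every scale) into $\psi_0$, which essentially amounts to re-proving the homogenization lemma; as written, the argument has a genuine gap.
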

\begin{proof}
To prove the lemma, we will apply the homogenization procedure from~\cite{SZTransference} to~$\psi_0$\footnote{We thank Fedor Nazarov for suggesting to use the homogenization procedure in this context.}. Let us briefly describe it. Let~$g$ be a function on the interval~$I = [i_1,i_2]$ and let~$J$ be an interval. Define the transfer~$g_{J}$ of~$g$ to~$J$ by the rule
\begin{equation*}
g_{\scriptscriptstyle{J}}(x) = g\Big((x-j_1)\frac{i_2 - i_1}{j_2 - j_1} + i_1\Big), \quad x \in J = [j_1,j_2].
\end{equation*}

Now let~$\lambda \in (0,1)$. Consider the splitting of~$[-\frac12,\frac12]$ into subintervals\textup:
\begin{equation*}
I_{k,\pm} = \Big[\pm\frac{1-\lambda^{k-1}}{2}, \pm\frac{1 - \lambda^k}{2}\Big],\quad k\in \mathbb{N}.
\end{equation*}
Let~$g$ be a function defined on~$[-\frac12,\frac12]$. We call the function~$\Gamma_{\lambda}[g]$ defined on the same interval by the formula
\begin{equation*}
\Gamma_{\lambda}[g] = g_{I_{k,\pm}} \ \hbox{on the interval}\ I_{k,\pm},\quad k\in\mathbb{N},
\end{equation*}
the~$\lambda$-homogenization of~$g$.

Note that~$\Gamma_{\lambda}[g]$ has the same distribution as~$g$. Lemma~$2.7$ in~\cite{SZTransference} says that
\begin{equation*}
\|\Gamma_{\lambda}[\psi_0]\|_{\BMO([-\frac12,\frac12])} \leq 1+\delta,
\end{equation*} 
provided~$\lambda$ is sufficiently close to one. From now on we suppose~$\lambda$ to be sufficiently close to one to fulfill this property. Set
\begin{equation*}
\psi(x) = \Gamma_{\lambda}[\psi_0]\Big(x-\frac12\Big),\quad x\in [0,1],
\end{equation*}
and extend~$\psi$ to the whole line by zero to fulfill property~\ref{prop2}. Since the distribution of~$\psi|_{[0,1]}$ coincides with the distribution of~$\psi_0|_{[0,1]}$, we have requirement~\ref{prop32} satisfied. We also note that~$\|\psi\|_{\BMO([0,1])}\leq 1+\delta$. 

It remains to verify property~\ref{prop42}. Let us first show that for any~$\tilde J = [0,a]$ or $\tilde J = [a,1]$,~$a\in (0,1)$, one has
\begin{equation}\label{Close}
|\av{\psi}{\tilde J}-\av{\psi_0}{[0,1]}| \leq \delta, \qquad |\av{\psi^2}{\tilde J}-\av{\psi_0^2}{[0,1]}| \leq \delta.
\end{equation}
To do this, we note the distribution of~$\Gamma_{\lambda}[\psi_0]|_{\tilde{J}}$ is close to the distribution of~$\psi_0$ on~$[0,1]$ (see the proof of Lemma~$2.7$ in~\cite{SZTransference}) for details). So, we may choose~$\lambda$ to be sufficiently close to~$1$ in such a way that~\eqref{Close} holds true.

Let us verify property~\ref{prop42}. For any interval $J\subset \mathbb{R}$ we need to prove
$$V(J) \stackrel{\hbox{\tiny def }}{=}
 \av{\psi^2}{J} - \av{\psi}{J}^2 \leq (1+\delta)^2.$$
Consider several cases:
\begin{itemize}
\item if $J \cap [0,1] = \emptyset$, then $\psi=0$ on $J$ and $V(J)=0$; 
\item the case $J \subset [0,1]$ had been already considered:
$
V(J) \leq \|\psi\|_{\BMO([0,1]}^2 \leq (1+\delta)^2;
$
\item if $\tilde J = J \cap [0,1] \ne \emptyset$, and $J \not\subset [0,1]$, then we may apply~\eqref{Close} to $\tilde J$ and obtain:
$$
V(J) \leq \av{\psi^2}{J} \leq \av{\psi^2}{\tilde J} \leq 1+\delta.
$$
\end{itemize}
The lemma is proved.
\end{proof}


Dmitriy Stolyarov

\medskip

d.m.stolyarov@spbu.ru.

\bigskip

Vasily Vasyunin

\medskip

vasyunin@pdmi.ras.ru

\bigskip

Pavel Zatitskiy

\medskip

pavelz@pdmi.ras.ru.

\medskip

St. Petersburg State University, Department of Mathematics and Computer Science, 14th line 29b, Vasilyevsky Island, St. Petersburg, Russia, 199178.

\end{document}